\title{On the topology of conormal complexes and posets of matroids}
\author{Anastasia Nathanson}
\address{School of Mathematics, University of Minnesota, Minneapolis MN 55455}
\email{natha129@umn.edu}
\author{Ethan Partida}
\address{Department of Mathematics, Brown University, Box 1917, Providence, RI 02912}
\email{ethan\_partida@brown.edu}
\subjclass[2010]{
05B35, 
05E14}
\theoremstyle{definition}
\newtheorem{defn}{Definition}[section]
\newtheorem{example}[defn]{Example}
\newtheorem{remark}[defn]{Remark}
\theoremstyle{plain}
\newtheorem{question}[defn]{Question}
\newtheorem{lem}[defn]{Lemma}
\newtheorem{prop}[defn]{Proposition}
\newtheorem{cor}[defn]{Corollary}
\theoremstyle{plain}
\newtheorem{thm}{Theorem}
\newtheorem{thmSpecial}{Theorem}
\newtheorem{corIntro}[thm]{Corollary}
\newcommand{\R}{\mathbb{R}} 
\newcommand{\Z}{\mathbb{Z}}
\newcommand{\FF}{\mathbf{F}} 
\newcommand{\FFd}{\mathbf{F^\perp}} 
\renewcommand{\L}{\mathcal{L}}
\newcommand{\F}{\mathcal{F}} 
\newcommand{\G}{\mathcal{G}} 
\renewcommand{\H}{\mathcal{H}} 
\newcommand{\K}{\mathcal{K}} 
\renewcommand{\L}{\mathcal{L}} 
\newcommand{\M}{\mathcal{M}}
\newcommand{\FG}{F \vert G}
\newcommand{\cl}{\mathrm{cl}}
\newcommand{\BF}{\mathbf{BF}} 
\newcommand{\BS}{\mathbf{BS}}
\newcommand{\op}{\mathrm{op}}
\newcommand{\diagarrow}{\,\rotatebox[origin=c]{-45}{$\rightarrow$}\,}
\renewcommand{\phi}{\varphi}
\DeclareMathOperator\trop{\mathrm{trop}}
\DeclareMathOperator\rk{\mathrm{rk}}
\DeclareMathOperator\cork{\mathrm{cork}}
\DeclareMathOperator\Aut{\mathrm{Aut}}
\DeclareMathOperator\cone{\mathrm{cone}}
\DeclareMathOperator\lk{\mathrm{lk}}
\DeclareMathOperator\dell{\mathrm{del}}
\DeclareMathOperator\str{\mathrm{str}}
\begin{document}

\begin{abstract}
We introduce the poset of biflats of a matroid $M$, a Lagrangian analog of the lattice of flats of $M$, and study the topology of its order complex, which we call the biflats complex. This work continues the study of the Lagrangian combinatorics of matroids, which was recently initiated by work of Ardila, Denham and Huh. We show the biflats complex contains two distinguished subcomplexes: the conormal complex of $M$ and the simplicial join of the Bergman complexes of $M$ and $M^\perp$, the matroidal dual of $M$. Our main theorems give sequences of elementary collapses of the biflats complex onto the conormal complex and the join of the Bergman complexes of $M$ and $M^\perp$. These collapses give a combinatorial proof that the biflats complex, conormal complex and the join of the Bergman complexes of $M$ and $M^\perp$ are all simple homotopy equivalent. Although simple homotopy equivalent, these complexes have many different combinatorial properties. We collect and prove a list of such properties.
\end{abstract}

\maketitle

\section{Introduction}
The Bergman fan of a matroid $M$, first defined by Ardila and Klivans \cite{AK}, is a simplicial fan with support equal to the tropical linear space associated to the matroid, $\trop(M)$. The work of Ardila, Denham, and Huh \cite{ADH} introduces a Lagrangian analog of the well-studied Bergman fan. This fan, called the \emph{conormal fan} of the matroid, is a simplicial fan with support equal to the product $\trop(M)\times \trop(M^\perp)$, where $M^\perp$ is the dual matroid of $M$. For motivation behind the descriptor ``Lagrangian'' see \cite[Sec. 1.2]{ADH}.

The conormal fan 
provides an intersection-theoretic interpretation of the $h$-vector of the broken circuit complex of $M$. 
Ardila, Denham and Huh used this interpretation to affirm long-standing conjectures on the log concavity of this $h$-vector posed in \cite{dawson}, \cite{swartz}, and \cite{brylawski}. 
The combinatorics of the conormal fan was further explored by Ardila, Denham and Huh in \cite{ADHCombo}. 
There, they give a combinatorial proof that certain intersection numbers in the Chow ring of the conormal fan $M$ are the $h$-vector of the broken circuit complex of $M$.

We continue the exploration of the Lagrangian combinatorics of matroids by studying the simplicial complex associated to the conormal fan, the \emph{conormal complex}.
Unlike the Bergman complex, $\Delta_M$, the conormal complex, denoted $\Delta_{M, M^\perp}$, is combinatorially unruly. We show that the conormal complex is generally \emph{not flag} and hence cannot be the order complex of a poset. 

We study the relationship between the conormal complex $\Delta_{M, M^\perp}$ and the join of the Bergman complexes $\Delta_M *\Delta_{M^\perp}$ by including both complexes as subcomplexes of a larger simplicial complex. We call this larger complex the \emph{biflats complex} and denote it by $\Delta(\BF_{M,M^\perp})$.
Just as the Bergman complex is the order complex of the lattice of flats of a matroid, the biflats complex is the order complex of the \emph{poset of biflats}, denoted $\BF_{M, M^\perp}$. A biflat is the Lagrangian analogue of a flat, introduced in \cite{ADH}. Each biflat, denoted $F\vert G$, consists of a flat $F$ of $M$ and a flat $G$ of $M^\perp$ such that their union is the ground set $E$; see Section \ref{sec:biflats} for a more detailed definition.
The set of biflats is ordered by inclusion in the first component and reverse inclusion in the second component. The poset $\BF_{M,M^\perp}$ is still a combinatorially difficult object. It is often the case that there is a minimal biflat $F\vert G$ and maximal biflat $F'\vert G'$ such that two maximal chains between $F\vert G$ and $F'\vert G'$ have different lengths. This means that $\BF_{M,M^\perp}$ is not a ranked poset and thus, the biflats complex $\Delta(\BF_{M,M^\perp})$ is not pure dimensional. Furthermore, the biflats complex is not (non-pure) shellable. 
 
The conormal complex can be defined as a subcomplex of the biflats complex and hence naturally includes as a subcomplex. In contrast, the product of Bergman complexes $\Delta_M*\Delta_{M^\perp}$  is not defined as a subcomplex of the biflats complex. We show that $\Delta_M * \Delta_{M^\perp}$ is indeed isomorphic to a sub order-complex of $\Delta(\BF_{M,M^\perp})$ which we call the \emph{unmixed biflats complex} and denote by $\Delta(\BF_{M,M^\perp}^u)$.

Ardila, Denham and Huh prove that the conormal complex $\Delta_{M,M^\perp}$ and the join of the Bergman complexes $\Delta_M* \Delta_{M^\perp}$ are related by a sequence of edge subdivisions and inverse edge subdivisions \cite[Theorem 6.25]{ADH}. These operations preserve homotopy type. In fact, these operations preserve a stronger equivalence, namely, \emph{simple homotopy type}. Two simplicial complexes are simple homotopy equivalent if there exists a third simplicial complex which \emph{collapses} onto both of them. A collapsing of a simplicial complex $\Delta$ onto a subcomplex $\tau$ is a sequence of simplicial complexes 
\[\Delta=\Delta_0\diagarrow \Delta_1\diagarrow \cdots\diagarrow \Delta_k=\Delta'\]
such that, for all $i$, $\Delta_i$ is obtained from $\Delta_{i-1}$ by an \emph{elementary collapse}. An elementary collapse $\Delta_{i-1}\diagarrow \Delta_i$ is a simple combinatorial operation which deletes some faces from $\Delta_{i-1}$ to obtain the subcomplex $\Delta_i$. These deletions are defined so that $\Delta_{i-1}$ and $\Delta_i$ are guaranteed to have the same homotopy type. For more details on these concepts, see Section \ref{sec:simpleHomotopy}.

Our main contribution is to make explicit the simple homotopy equivalence between the conormal complex $\Delta_{M,M^\perp}$ and the join of Bergman complexes $\Delta_M*\Delta_{M^\perp}$. 
That is, we give explicit collapses of $\Delta(\BF_{M,M^\perp})$ onto both $\Delta_{M,M^\perp}$ and $\Delta_M*\Delta_{M^\perp}$. These collapses are defined purely matroidally and rely heavily on an understanding of the combinatorics of biflats.

\begin{thm}
\label{thm:unmixed}
    Given a matroid $M$, there is an explicit sequence of elementary collapses
    \[\Delta(\BF_{M,M^\perp})\diagarrow \Delta_1 \diagarrow \cdots \diagarrow \Delta(\BF_{M,M^\perp}^u)\cong \Delta_M * \Delta_{M^\perp} \]
    from the biflats complex $\Delta(\BF_{M,M^\perp})$ onto the join of the Bergman complexes $\Delta_M * \Delta_{M^\perp}$. 
\end{thm}
\begin{thm} \label{thm:conormal}
    Given a matroid $M$, there is an explicit sequence of elementary collapses
    \[\Delta(\BF_{M,M^\perp})\diagarrow \Delta_1 \diagarrow \cdots \diagarrow  \Delta_{M,M^\perp}\]
    from the biflats complex $\Delta(\BF_{M,M^\perp})$ onto the conormal complex $\Delta_{M,M^\perp}$. 
\end{thm}

From these two theorems, we recover the simple homotopy equivalence between $\Delta_{M,M^\perp}$ and $\Delta_M* \Delta_{M^\perp}$ guaranteed by \cite[Theorem 6.25]{ADH}.
\begin{corIntro}
    Given a matroid $M$, the conormal complex $\Delta_{M,M^\perp}$ is simple homotopy equivalent to the join of Bergman complexes $\Delta_M*\Delta_{M^\perp}$.
\end{corIntro}

When $M$ is a uniform matroid, we show that the conormal complex, biflats complex and unmixed biflats complex are all equal as simplicial complexes in Lemma \ref{lem:unmixed_join}. 
However, if $M$ is not a uniform matroid, these complexes, while all homotopic, have different topological and combinatorial properties. We collect a table of the major properties in Table ~\ref{tab:table}.

\begin{table}[ht] 
\label{tab:table}
\centering
    \begin{tabular}{lccc}\toprule
         {Properties} &  {Conormal Complex} &  {Biflats complex} & {Unmixed Biflats complex} \\ 
         {} &  {$\Delta_{M,M^\perp}$} &  {$\Delta\left(\BF_{M,M^\perp}\right)$} & {$\Delta\left(\BF_{M,M^\perp}^{u}\right)$} \\\midrule
         Dimension & {\large$|E|-3$}&  \large{$|E|-2$}& \large{$|E|-3$}\\\addlinespace
         Pure & { \huge\checkmark} & \huge$\times$ & \huge\checkmark\\\addlinespace
         Flag & \huge$\times$ & \huge\checkmark & \huge\checkmark\\\addlinespace
         (Non-pure) Shellable & \huge? & \huge$\times$ & \huge\checkmark \\\addlinespace
         Homeomorphic to $\Delta_M*\Delta_{M^\perp}$ & \huge\checkmark & \huge$\times$ & \huge\checkmark  \\\addlinespace 
         \midrule
    \end{tabular}
    \caption{Properties of the conormal complex, biflats complex and unmixed biflats complex for a non-uniform matroid $M$. References for the claims, from left to right, are as follows: 
    Dimension (\cite[Lemma 2.1]{ADHCombo}, Lemma \ref{lem:biflats_dim}, Lemma \ref{lem:unmixed_join}), 
    Pure (\cite[Lemma 2.1]{ADHCombo}, Example \ref{ex:small_triangle}, Lemma\ref{lem:unmixed_join}), 
    Flag (Example \ref{ex:big_triangle}, Definition \ref{def:biflat_complex}, Lemma \ref{lem:unmixed_join}), 
    Shellable (Remark \ref{rmk:shellable}, Example \ref{ex:big_triangle}, Lemma \ref{lem:unmixed_join}), 
    Homeomorphic (Proposition \ref{prop:homeomorphic}, Lemma \ref{lem:biflats_dim}, Lemma\ref{lem:unmixed_join}).
      }
\end{table}

We provide the necessary background definitions in Sections ~\ref{sec:simp} through ~\ref{sec:biflats} with a look at the fan structure behind our definitions in Section ~\ref{sec:conormal_back}. 
We introduce the objects at play in Theorem ~\ref{thm:unmixed} in Section ~\ref{sec:unmixedbackground} and prove the theorem in Section ~\ref{sec:unmixed}. We introduce the objects of Theorem ~\ref{thm:conormal} in Section ~\ref{sec:conormalbackgroundsection} and prove the theorem in Section ~\ref{sec:conormalcomplex}. 
We collect examples of Lagrangian objects in Section ~\ref{sec:examples}. Finally, Section~\ref{sec:future} collects some further questions related to this work.

\section*{Acknowledgments}
The authors are grateful to Victor Reiner for suggesting this project and for continued guidance during all of its stages, as well as Caroline Klivans and Melody Chan for many insightful suggestions and questions. The authors would also like to thank Federico Ardila, Graham Denham, Trevor Karn, Trent Lucas, Gregg Musiker and Raul Penaguiao for helpful references and conversations. They thank the anonymous referees for helpful suggestions that improved the exposition. The first author was partially supported by NSF DSM-2053288.

\section{Background} \label{sec:background}

In Sections \ref{sec:simp} through \ref{sec:conormalbackgroundsection}, we introduce all of our necessary definitions. Section \ref{sec:examples} provides three worked out examples of these definitions. The examples introduced in \ref{sec:examples} will be referenced throughout the following sections of the article

\subsection{Posets and simplicial complexes} \label{sec:simp}

Here, we collect a few definitions related to simplicial complexes and posets as we prepare to discuss the central objects of our work.


A \emph{simplicial complex} $\Delta$ on vertex set $V$ is a collection of subsets of $V$ such that if $\sigma \in \Delta$ and $\tau \subset \sigma$ then $\tau\in \Delta$. We refer to the sets $\sigma \in \Delta$ as the \emph{faces} of $\Delta$. A \emph{facet} of $\Delta$ is an inclusion-maximal face of $\Delta$. The \emph{dimension} of a face is equal to one less than its cardinality. The dimension of $\Delta$ is the maximum dimension of its faces. We say that $\Delta$ is \emph{pure} if all of its facets are the same dimension. 

A poset $P$ is \emph{bounded} if there are elements $\hat 0,\hat 1 \in P$ such that for all $x\in P$, $\hat 0 \leq x \leq \hat 1$. A bounded poset $P$ is \emph{ranked} if for all $x\in P$, all saturated chains from $\hat 0$ to $x$ are the same length. We often represent a poset $P$ by its \emph{Hasse diagram}. The Hasse diagram of a poset is the directed graph whose vertices are the elements of $P$ and whose edges are the pairs $(x,y)$ such that $x<y$. We draw our Hasse diagrams as undirected graphs with the implicit understanding that if there is an edge from $x$ to $y$ and $y$ is above $x$, then $x< y$. The {order complex} $\Delta(P)$ of $P$, is the simplicial complex whose vertex set is the elements of $P$ and whose faces are the chains in $P$. 
A simplicial complex  is \emph{flag} if all of its inclusion-minimal non-faces are two element sets. 
Since partial orders are transitive, every order complex is flag.
For more detailed discussions, we recommend  \cite{Bjorner} and \cite{EC1} as references for simplicial complexes and posets, respectively.

For a face $F$ of a simplicial complex $\Delta$, 
\begin{itemize}
    \item the \emph{link} of $F$ is the simplicial complex $\lk_\Delta(F) = \{G\in \Delta: G\cap F = \emptyset, G\cup F \in \Delta\};$
    \item the \emph{open star} of $F$  is the set of faces, $\str_\Delta(F)= \{G\in \Delta: F\subseteq G \};$
    \item the \emph{deletion} of face $F$ of $\Delta$ is the simplicial complex 
    $\dell_\Delta(F)= \{G\in \Delta: F\not\subseteq G\}.$
\end{itemize}

\noindent 
Note that the deletion of face $F$ can be equivalently defined as $\dell_\Delta(F)\coloneqq \Delta \setminus \str_\Delta(F).$ The deletion of faces is commutative; that is, for faces $F,G$ in a simplicial complex $\Delta$,
\[\dell_{\dell_{\Delta}(F)}(G)=\dell_{\dell_{\Delta}(G)}(F) = \{H\in \Delta: F\not\subseteq H, G\not\subseteq H\}.\]
Additionally, 
    the \emph{join} of two disjoint simplicial complexes $\Delta_1 $ and $\Delta_2$ is the complex 
    \[\Delta_1 * \Delta_2 \coloneqq \{\sigma \cup \tau : \sigma\in \Delta_1, \tau\in \Delta_2\}.\]

\noindent
The \emph{ordinal sum} of posets  $X\oplus Y$ is the poset on the disjoint union of elements in $X, Y$ such that $p\leq q$ either if $p \leq q$ as elements of $X$, elements of $Y$, or, if $p\in X$ and $q\in Y$.

\begin{lem} \label{lem:join}
    If $P$ is the ordinal sum of posets $X \oplus Y$, then its order complex $\Delta(P)$ is the join of $\Delta(X)*\Delta(Y)$.
\end{lem}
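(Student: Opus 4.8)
The plan is to prove the equality of simplicial complexes $\Delta(P) = \Delta(X)*\Delta(Y)$ by identifying their faces directly. Recall that a face of $\Delta(P)$ is by definition a chain in $P = X\oplus Y$, and that since $X$ and $Y$ are disjoint in the ordinal sum, the join $\Delta(X)*\Delta(Y)$ is well-defined, with vertex set $X \sqcup Y = P$. So it suffices to check that the chains of $P$ are exactly the sets of the form $\sigma\cup\tau$ with $\sigma$ a chain of $X$ and $\tau$ a chain of $Y$.

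First I would take an arbitrary face $C$ of $\Delta(P)$, i.e.\ a chain of $P$, and decompose it as $C = \sigma \sqcup \tau$ where $\sigma = C \cap X$ and $\tau = C \cap Y$. Since the restriction of the order of $P$ to $X$ is precisely the original order on $X$ (and likewise for $Y$), the set $\sigma$ is a chain of $X$ and $\tau$ is a chain of $Y$; hence $\sigma \in \Delta(X)$, $\tau \in \Delta(Y)$, and $C = \sigma \cup \tau \in \Delta(X)*\Delta(Y)$.

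Conversely, given $\sigma \in \Delta(X)$ and $\tau \in \Delta(Y)$ — a chain in $X$ and a chain in $Y$ — I would verify that $\sigma \cup \tau$ is a chain in $P$: any two elements of $\sigma$ are comparable in $X$ hence in $P$; any two elements of $\tau$ are comparable in $Y$ hence in $P$; and for $p \in \sigma \subseteq X$ and $q \in \tau \subseteq Y$ one has $p \le q$ directly from the definition of the ordinal sum. Thus $\sigma \cup \tau$ is a chain, i.e.\ a face of $\Delta(P)$. The two inclusions together give $\Delta(P) = \Delta(X)*\Delta(Y)$.

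This argument is a direct unwinding of the definitions, so there is no substantive obstacle; the only points that require a moment's care are the bookkeeping observations that the induced orders of $P$ on $X$ and on $Y$ agree with the original partial orders — immediate from the definition of ordinal sum — and that the degenerate cases (the empty chain, or $\sigma$ or $\tau$ empty) are handled consistently on both sides, which they are since $\emptyset$ is a face of every complex.
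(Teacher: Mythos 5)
Your proof is correct and is exactly the direct unwinding of definitions that the paper alludes to — the paper itself omits the details and simply states that the lemma follows immediately from the definitions of ordinal sum and order complex. Your two-inclusion argument fills in precisely those details with no deviation in approach.
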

\noindent Lemma \ref{lem:join} follows immediately from the definition of ordinal sums and order complexes of posets.

\begin{defn}
A simplicial complex $\Delta$ contains a \emph{cone vertex} $v$ if $\Delta$ can be written as $v*\Delta'$ where $\Delta'$ is a subcomplex of $\Delta$. This is equivalent to the condition that all inclusion-maximal faces of $\Delta$ contain $v$.
\end{defn}

\subsection{Simple homotopy equivalence} \label{sec:simpleHomotopy}
As explained in the introduction, the aim of this paper is to explicitly show that $\Delta_{M,M^\perp}$ is simple homotopy equivalent to $\Delta_M * \Delta_{M^\perp}$. We will do this by constructing a sequence of elementary collapses from the biflats complex $\Delta(\BF_{M,M^\perp})$ onto both $\Delta_{M,M^\perp}$ and $\Delta_M * \Delta_{M^\perp}$. We review the definition of an elementary collapse below.

\begin{defn}
Let $\sigma$ be a $k$-dimensional inclusion-maximal face of a simplicial complex $\Delta$ and $\tau$ be a $k-1$-dimensional face of $\Delta$. Suppose that $\tau\subset \sigma$ and $\tau$ is not contained in any other inclusion-maximal face of $\Delta$. Then $\tau$ is a \emph{free face} of $\sigma$ and we say that $\Delta$ has an \emph{elementary collapse} onto the subcomplex $\Delta' = \Delta\setminus \{\sigma, \tau \}$ and denote this by $\Delta \diagarrow \Delta'$. Elementary collapses preserve homotopy type. 
\end{defn}

\begin{defn}
    If there is a sequence of elementary collapses from a complex $\Delta$ onto a subcomplex $\Delta'$, we say that $\Delta$ \emph{collapses} onto $\Delta'$. We say that two complexes $\Delta_1$ and $\Delta_2$ are \emph{simple homotopy equivalent} if there is a larger complex $\Delta_3$ which collapses onto both $\Delta_1$ and $\Delta_2$.
\end{defn}


Simple homotopy equivalence is a combinatorial refinement of homotopy equivalence; there exists homotopy equivalent complexes which are not simply homotopy equivalent. For examples of this phenomenon and a broader introduction to simple homotopy theory, see \cite{Cohen}. 
We make use of the following lemmas on elementary collapses.

\begin{lem}\cite[Lemma 2.3]{welker}
\label{lem:cone_collapse}
Any simplicial complex $\Delta$ which has a cone vertex $v$ is collapsible to $v$.
\end{lem}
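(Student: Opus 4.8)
The plan is to induct on the number of faces of $\Delta'$, where we write $\Delta = v*\Delta'$ with $\Delta' = \dell_\Delta(v)$. If $\Delta'$ consists of only the empty face, then $\Delta = \{\emptyset, \{v\}\}$ is already the single vertex $v$ and there is nothing to do. The preliminary fact I would isolate first is that the inclusion-maximal faces of a cone $v*\Delta'$ are precisely the sets $\{v\}\cup\tau$ for $\tau$ an inclusion-maximal face of $\Delta'$: by definition of the join, the faces of $v*\Delta'$ are the faces of $\Delta'$ together with the sets $\{v\}\cup\tau$ for $\tau\in\Delta'$, and since $v\notin\tau$ every face $\tau$ of $\Delta'$ is strictly contained in $\{v\}\cup\tau$, so no face of $\Delta'$ is inclusion-maximal in $\Delta$, while $\{v\}\cup\tau$ is inclusion-maximal in $\Delta$ exactly when $\tau$ is inclusion-maximal in $\Delta'$.

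For the inductive step I would suppose $\Delta'$ has at least one vertex and choose any inclusion-maximal face $\sigma$ of $\Delta'$ with $\sigma\neq\emptyset$. By the preliminary fact, $\{v\}\cup\sigma$ is an inclusion-maximal face of $\Delta$ of dimension $\dim\sigma+1$, and I claim that $\sigma$ is a free face of it: the dimension of $\sigma$ is $(\dim\sigma+1)-1$, and if $\sigma$ were contained in some inclusion-maximal face $\{v\}\cup\tau$ of $\Delta$ then $\sigma\subseteq\tau$, forcing $\sigma=\tau$ by maximality of $\sigma$ in $\Delta'$; hence $\{v\}\cup\sigma$ is the only inclusion-maximal face of $\Delta$ containing $\sigma$. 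Removing $\sigma$ and $\{v\}\cup\sigma$ is therefore an elementary collapse; call the resulting complex $\Delta_1$.

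The crux is then that $\Delta_1$ is again a cone with cone vertex $v$, namely $\Delta_1 = v*(\Delta'\setminus\{\sigma\})$; I would check this by noting that the only faces removed are $\sigma$ (which does not contain $v$) and $\{v\}\cup\sigma$, so the faces of $\Delta_1$ avoiding $v$ are exactly those of $\Delta'\setminus\{\sigma\}$, which is a simplicial complex because $\sigma$ is inclusion-maximal in $\Delta'$, and if some face $\rho$ of $\Delta_1$ with $v\notin\rho$ were inclusion-maximal in $\Delta_1$ then, since $\Delta$ had cone vertex $v$, the face $\{v\}\cup\rho$ of $\Delta$ must have been removed, forcing $\rho=\sigma$ and contradicting $\sigma\notin\Delta_1$. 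Since $\Delta'\setminus\{\sigma\}$ has strictly fewer faces than $\Delta'$, induction gives that $\Delta_1$ collapses to $v$, and prepending the elementary collapse above gives a collapse of $\Delta$ to $v$. I do not anticipate a genuine obstacle, only bookkeeping; the one point requiring care is exactly this persistence of the cone structure under the collapse, and one could alternatively bypass the induction entirely by pairing each nonempty face $\sigma$ of $\Delta'$ with $\{v\}\cup\sigma$ and performing these collapses in any order of non-increasing $\dim\sigma$, verifying directly that each such pair is a valid elementary collapse at the moment it is performed.
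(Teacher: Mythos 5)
The paper itself supplies no proof of this lemma, only a citation to Welker's Lemma 2.3, so there is nothing internal to compare against. Your argument is correct and is essentially the standard proof of this fact: you pair each nonempty face $\sigma$ of $\Delta' = \dell_\Delta(v)$ with the coned face $\{v\}\cup\sigma$ and collapse these pairs, working from the top down, and the key observation — that after each elementary collapse the remaining complex is still a cone on $v$, so the induction can proceed — is stated and verified carefully. All the small points that need checking (that $\sigma$ really is free in $\{v\}\cup\sigma$, that $\Delta'\setminus\{\sigma\}$ remains a simplicial complex because $\sigma$ was inclusion-maximal, that no face of $\Delta_1$ avoiding $v$ becomes inclusion-maximal) are present and right.
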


\begin{lem} \cite[Lemma 2.7]{welker}
\label{lem:collapse}
If a simplicial complex $\Delta$ has a face $F$ for which $\lk_\Delta(F)$ is collapsible to a vertex $v$ in $\lk_\Delta(F)$, then $\Delta$ is collapsible to $\dell_\Delta(F)$. 
\end{lem}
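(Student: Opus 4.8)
The plan is to lift a collapsing sequence of $\lk_\Delta(F)$ to a collapsing sequence of $\Delta$ itself, peeling the open star $\str_\Delta(F)$ off one codimension-one pair at a time. Write $L \coloneqq \lk_\Delta(F)$ and fix a sequence of elementary collapses from $L$ down to $\{v\}$, with intermediate complexes $L = L_0, L_1, \dots, L_m = \{v\}$, where $L_{i+1} = L_i \setminus \{\tau_i, \sigma_i\}$ for $\sigma_i$ an inclusion-maximal face of $L_i$ and $\tau_i$ a free face of $\sigma_i$. Since the vertices of $L$ are disjoint from those of $F$, the join $F * L_i \coloneqq \{A \cup G : A \subseteq F,\ G \in L_i\}$ is defined, and it is a subcomplex of $\Delta$ because each of its faces sits inside some $F \cup G$ with $G \in L_i \subseteq L$, which lies in $\Delta$. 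Put $\Delta_i \coloneqq \dell_\Delta(F) \cup (F * L_i)$, a subcomplex of $\Delta$. First I would nail down the endpoints: the faces of $F * L$ containing $F$ are exactly those of $\str_\Delta(F)$ and every other face of $F * L$ lies in $\dell_\Delta(F)$, so $\Delta_0 = \dell_\Delta(F) \cup \str_\Delta(F) = \Delta$; at the other end, using $v \notin F$, every face of $F * \{v\}$ other than $F$ and $F \cup \{v\}$ fails to contain $F$ and hence already lies in $\dell_\Delta(F)$, so $\Delta_m = \dell_\Delta(F) \cup \{F,\ F \cup \{v\}\}$.

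The heart of the proof is the claim that each passage $\Delta_i \to \Delta_{i+1}$ is a single elementary collapse, namely the removal of the pair $\bigl(F \cup \tau_i,\ F \cup \sigma_i\bigr)$. Because a face of $F * L_i$ decomposes uniquely into its part inside $F$ and its part inside $L_i$, the faces of $F * L_i$ that are absent from $F * L_{i+1}$ are precisely those of the form $A \cup \sigma_i$ or $A \cup \tau_i$ with $A \subseteq F$; among these, only $A = F$ gives a face containing $F$, and all the others already belong to $\dell_\Delta(F)$, so in fact $\Delta_i \setminus \Delta_{i+1} = \{F \cup \sigma_i,\ F \cup \tau_i\}$. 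To verify this is a legal elementary collapse inside $\Delta_i$, I use that every face of $\Delta_i$ containing $F$ has the form $F \cup G$ with $G \in L_i$. Then $F \cup \sigma_i$ is inclusion-maximal in $\Delta_i$ because $\sigma_i$ is inclusion-maximal in $L_i$; we have $F \cup \tau_i \subsetneq F \cup \sigma_i$ of codimension one since $|\tau_i| = |\sigma_i| - 1$; and if $F \cup \tau_i \subseteq H$ for an inclusion-maximal face $H$ of $\Delta_i$, then $F \subseteq H$ forces $H = F \cup G$ with $\tau_i \subseteq G \in L_i$, whence $G$ lies in the unique inclusion-maximal face of $L_i$ containing $\tau_i$, namely $\sigma_i$, so $H = F \cup \sigma_i$. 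Thus $F \cup \tau_i$ is a free face of $F \cup \sigma_i$ in $\Delta_i$.

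To finish, perform one final elementary collapse $\Delta_m \to \dell_\Delta(F)$ removing the pair $\bigl(F,\ F \cup \{v\}\bigr)$: in $\Delta_m$ the face $F \cup \{v\}$ is inclusion-maximal, it is the only inclusion-maximal face containing $F$ because $\dell_\Delta(F)$ has no face containing $F$, and $\dim F = \dim(F \cup \{v\}) - 1$. Concatenating the moves yields $\Delta = \Delta_0 \to \Delta_1 \to \cdots \to \Delta_m \to \dell_\Delta(F)$, a sequence of elementary collapses, so $\Delta \simeq \dell_\Delta(F)$. The one genuinely delicate point — and the step I would watch most carefully — is the bookkeeping in the middle paragraph: one has to be sure that the auxiliary faces $A \cup \sigma_i$ and $A \cup \tau_i$ with $A \subsetneq F$ are left undisturbed because they persist inside $\dell_\Delta(F)$, so that each lifted move deletes exactly the two faces $F \cup \sigma_i$ and $F \cup \tau_i$ and is therefore an honest elementary collapse rather than a bulk simplification.
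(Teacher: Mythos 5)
The paper cites this result from Welker without reproducing a proof, so there is no internal argument to compare against; your proof is correct and is the standard one. Lifting the collapsing sequence of $\lk_\Delta(F)$ by forming the coned intermediate complexes $\Delta_i = \dell_\Delta(F)\cup(F*L_i)$, checking that $\Delta_i\setminus\Delta_{i+1}=\{F\cup\tau_i,\,F\cup\sigma_i\}$ because the faces $A\cup\tau_i,\ A\cup\sigma_i$ with $A\subsetneq F$ survive inside $\dell_\Delta(F)$, verifying that maximality and freeness transfer from $L_i$ to $\Delta_i$, and finishing with the terminal collapse of the pair $(F,\ F\cup\{v\})$ is exactly the argument one expects, and every step is carefully justified.
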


\noindent For any poset $P$, we use $P_{<x}$ to denote the subposet that contains all elements $y\in P$ such that $y<x$ in $P$.

\begin{lem}\label{lem:pos_collapse}
If $P$ is a poset and $x\in P$ such that $P_{<x}$ has a \emph{unique} maximal element then there exists a sequence of elementary collapses from $\Delta(P)$ onto $\Delta(P\setminus\{x\})$. 
\end{lem}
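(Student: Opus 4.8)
The plan is to reduce this to Lemma~\ref{lem:collapse} by finding a face $F$ of $\Delta(P)$ whose link is collapsible to a vertex and whose deletion is exactly $\Delta(P\setminus\{x\})$. Let $m$ be the promised minimal element of $P_{>x}$. The natural candidate is the edge $F=\{x,m\}$, or more precisely I would consider the vertex $x$ itself and analyze $\lk_{\Delta(P)}(x)$. The link of the vertex $x$ in $\Delta(P)$ is the order complex of the subposet $P_{<x}\oplus P_{>x}$ obtained by removing $x$ and everything incomparable to it; by Lemma~\ref{lem:join} this is $\Delta(P_{<x})*\Delta(P_{>x})$. Now $\Delta(P_{>x})$ has $m$ as a cone vertex, since $m$ lies below every other element of $P_{>x}$ (as $m$ is the minimum of $P_{>x}$), so every maximal chain of $P_{>x}$ contains $m$. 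Hence $\lk_{\Delta(P)}(x)=\Delta(P_{<x})*\Delta(P_{>x})$ has $m$ as a cone vertex too, and by Lemma~\ref{lem:cone_collapse} it is collapsible to $m$.

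With that in hand, Lemma~\ref{lem:collapse} applied to the face $F=\{x\}$ gives a sequence of elementary collapses from $\Delta(P)$ onto $\dell_{\Delta(P)}(\{x\})$. It remains to identify this deletion: by definition $\dell_{\Delta(P)}(\{x\})$ consists of all chains of $P$ not containing $x$, which is precisely the set of chains of the subposet $P\setminus\{x\}$, i.e.\ $\Delta(P\setminus\{x\})$. This completes the argument.

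The one point that needs a little care — and is really the only potential obstacle — is the identification $\lk_{\Delta(P)}(\{x\}) = \Delta(P_{<x})*\Delta(P_{>x})$. A chain $C$ lies in the link of $\{x\}$ iff $x\notin C$ and $C\cup\{x\}$ is still a chain, which forces every element of $C$ to be comparable to $x$, hence $C\subseteq P_{<x}\cup P_{>x}$; conversely any chain in $P_{<x}\cup P_{>x}$ together with $x$ is a chain. Since every element of $P_{<x}$ is below every element of $P_{>x}$, the poset $(P_{<x})\cup(P_{>x})$ with the induced order is exactly the ordinal sum $P_{<x}\oplus P_{>x}$, so Lemma~\ref{lem:join} applies and delivers the join decomposition. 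After that, everything is a direct appeal to the collapsing lemmas already recorded in the excerpt.
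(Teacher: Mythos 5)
Your argument is correct and is essentially identical to the paper's own proof: both identify $\lk_{\Delta(P)}(x)=\Delta(P_{<x})*\Delta(P_{>x})$ via the ordinal-sum decomposition of chains through $x$, observe that the minimal element $m$ of $P_{>x}$ is a cone vertex of that link, and then invoke Lemma~\ref{lem:cone_collapse} and Lemma~\ref{lem:collapse} to collapse $\Delta(P)$ onto $\dell_{\Delta(P)}(x)=\Delta(P\setminus\{x\})$. (Both you and the paper tacitly read ``minimal element'' in the statement as ``least element,'' which is the reading needed for $m$ to be a cone vertex.)
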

\begin{proof}
Observe that $\lk_{\Delta(P)}(x)= \Delta\left(P_{>x}\right)*\Delta\left(P_{<x}\right)$. This is because we can decompose any chain $C$ of $P$ containing $x$ as
\[C = C_{<x} <  x < C_{>x} \]
where $C_{<x}$ and $C_{>x}$ are chains of elements less than $x$ and greater than $x$, respectively. If $P_{<x}$ has a maximal element, then its order complex $\Delta(P_{<x})$ has a cone vertex and $\Delta\left(P_{>x}\right)*\Delta\left(P_{<x}\right)$ will have a cone vertex as well. By Lemma ~\ref{lem:collapse}, this means there exists a sequence of elementary collapses from $\Delta(P)$ onto $\dell_{\Delta(P)}(x) = \Delta(P\setminus\{x\})$. \qedhere
\end{proof}

\subsection{Matroids and their flats}
\label{sec:matroids}
Broadly, we study order complexes defined by the data of a matroid $M$. We refer the reader to \cite{oxley} for the basics of matroid theory and Sections $5$ and $6$ of \cite{icm_matroids} for a friendly introduction to the geometry of Bergman and conormal fans.

We will think of a \emph{matroid} as a tuple $M = (E, \rk_M)$ given by a finite \emph{ground set} $E$ and a submodular function called \emph{rank} on the subsets of the ground set, $\rk_M: 2^E \to \Z$ such that $\rk_M(\emptyset) = 0$ and, for any set $S\subseteq E$ and element $x\in E$,  
\[\rk_M(S) \leq \rk_M(S\cup \{x\}) \leq \rk_M(S)+1 .\] 
We forego the subscript when the matroid is clear from context. The rank of a matroid $M$ is the number $\rk_M(E)$.

For an arbitrary subset $S\subseteq E$, its \emph{closure} is 
$\cl_M(S)\coloneqq\{x\in S \colon \rk_M(S) =\rk_M\left(S\cup\{x\}\right)\}$. Sets that are their own closure are called \emph{flats}. The collection of flats, $\FF$, when ordered by inclusion forms a geometric lattice; in an abuse of notation, we will use $\FF$ to refer to both the lattice and the set of flats. 

A flat is \emph{proper} if it is not the ground set $E$. An element $x\in E$ is a loop of $M$ if it is contained in $\cl_M(\emptyset)$.
Define the \emph{corank} of a flat $F$ to be equal to $\cork_M(F) \coloneqq \rk_M(E)-\rk_M(F)$. A set $C\subseteq E$ is a \emph{circuit} if, for every $c\in C$, we have that $\rk_M(C) = \rk_M(C\setminus\{c\})=|C|-1$. 

The dual matroid, $M^\perp = (E, \rk_{M^\perp}$), is a matroid defined on the same ground set as $M$ with the rank function $\rk_{M^\perp}(S) = |S|-\rk_M(S)+\cork_M(S)$. The rank of a matroid and the rank of its dual matroid dual sum to the size of the ground set: $\rk_M(E) + \rk_{M^\perp}(E) = \abs{E}.$

The flats of the dual matroid, which we will call \emph{dual flats}\footnote{Note, flats of the dual matroid are also sometimes called ``coflats'', e.g. in \cite{ADH} and \cite{ADHCombo}. To avoid overloading the prefix ``co-'', we use the terminology dual flats. }. 
can be characterized as the complements of unions of circuits of $M$. We denote the collection of dual flats as $\FFd$. 

An element $i\in E$ is a \emph{coloop} of $M$ if it is a loop of $M^\perp$. We require all matroids to be loopless and coloopless. We provide a reasoning for this requirement in Remark ~\ref{rem:loops} after some of the discussion in Section ~\ref{sec:conormal_back}.

\emph{Uniform matroids} form an important and simple class of matroids. A matroid $M$ of rank $r$ on a ground set of size $n$, is a uniform matroid if $\rk_M(S) = \min\{r, |S|\}$  for every $S\subseteq E$; here, necessarily $r\leq n$. 
It follows that the circuits of a uniform matroid are all the cardinality $r+1$ subsets of $E$.
Up to a relabeling of the ground set, all rank $r$ uniform matroids on a ground set of size $n$ are equal. We denote such a matroid with the notation $U_{r,n}$. 
The matroid dual to $U_{r,n}$ is the uniform matroid $U_{n-r,n}$. We prove another characterization of uniform that will be useful throughout the rest of the paper.


\begin{prop}\label{prop:uniform_flats}
    A matroid $M$ is a uniform matroid if and only if there does not exist a proper flat $F$ and proper dual flat $G$ such that $F\cup G =E$.
\end{prop}

 \begin{proof}
 We can check this property by only considering inclusion maximal proper flats $F$ and inclusion maximal proper dual flats $G$. 
 By \cite[Proposition 2.1.6]{oxley}, we can write $F=E\setminus C^\perp$ and $G=E\setminus C$ where $C$ and $C^\perp$ are a circuit of $M$ and a circuit of $M^\perp$, respectively. Thus our claim is true if and only if for every circuit $C$ of $M$ and every circuit $C^\perp$ of $M^\perp$,
\[\left(E\setminus C^\perp\right)\cup \left(E\setminus C\right) \neq E \iff C^\perp\cap C \neq \emptyset. \]
It is a well known exercise that a matroid $M$ is uniform if and only if every circuit of $M$ intersects every circuit of $M^\perp$. See, for example, \cite[Exercise 3.24]{OxleySurvey}.
 \end{proof}

Lastly, we define a central object of this article.

\begin{defn}
    Given a matroid $M = (E, \rk_M)$, its \emph{Bergman complex}, $\Delta_M$, is the order complex of the proper, non-empty flats, $\FF\setminus\{\emptyset, E\}$, of the matroid. 
\end{defn}

\subsection{Lagrangian analogue to flats}\label{sec:biflats} 
The Langiangian theory of matroids was started by the work of 
Ardila, Denham, and Huh in \cite{ADH} and further continued in \cite{ADHCombo}. Most important to us is the Lagrangian analogue of a flat.

\begin{defn}
Given a loopless and coloopless matroid $M$, 
a \emph{biflat} $F \vert G$ consists of a flat $F\in \FF$ and a dual flat $G\in \FFd$ such that both are nonempty, at least one of them is proper and 
$F \cup G = E.$
We denote the set of all biflats as $\BF_{M,M^\perp}$, dropping the subscript when the matroid is clear. 
\end{defn}
\noindent Note that the last condition in the definition above can be recharacterized by saying that $\FG$ is a biflat if the closure of the complement of the flat $F$ in $M^\perp$ is contained within $G$, or $\cl_{M^\perp}(F^C) \subseteq G$. 

Just as flats can be ordered by inclusion, biflats can be ordered by inclusion conditions.
\begin{defn} \label{biflat_order}
  The \emph{poset of biflats} of a matroid $M$, denoted $\BF_{M,M^\perp}$, is the partial ordering of the set of biflats of $M$ by the condition that
  \[F\vert G \leq F'\vert G' \text{ if and only if } F \subseteq F' \text{ and } G\supseteq G'. \]
 We call a chain $\F \vert \G$ in $\BF_{M,M^\perp}$ a \emph{bichain}. 
\end{defn}

 Under this order, every biflat is preceded by $\emptyset \vert E$ and precedes
 $E\vert \emptyset$. While these are not biflats themselves, these ``pseudo-biflats'' can always be attached at the beginning and end of bichains. 
 That is, for any bichain $\F\vert \G =\{F_1\vert G_1 \leq \ldots \leq F_{\ell}\vert G_{\ell}\},$ it is customary to assume that $F_0\vert G_0 = \emptyset \vert E$ and $F_{\ell+1}\vert G_{\ell+1}= E\vert \emptyset$.

\begin{defn} \label{def:biflat_complex}
   The \emph{biflats complex} $\Delta\left(\BF_{M,M^\perp}\right)$ is the order complex of the poset of biflats. 
\end{defn}

We now record the dimension of the biflats complex. The proof we give is an adaptation of the proof of \cite[Lemma 2.1]{ADHCombo}.

\begin{lem}
  \label{lem:biflats_dim}
    The biflats complex of a non-uniform matroid $M$ has dimension $|E|-2$.
\end{lem}
\begin{proof}
Let $\rk_M(E) \coloneqq r$ and $\rk_{M^\perp}(E) \coloneqq r^\perp$. 
Suppose that $\mathcal{F}\vert\mathcal{G}=\{F_1\vert G_1 \leq \ldots \leq F_\ell\vert G_\ell\}$ is a bichain of $\BF_{M,M^\perp}$. 
Setting $F_0\vert G_0= \emptyset\vert E$ and $F_{\ell+1}\vert G_{\ell+1} = E\vert \emptyset$, we can see
\[\sum_{j=0}^\ell \left(\rk_M(F_{j+1})-\rk_M(F_j)\right) + \left(\rk_{M^\perp}(G_{j})-\rk_{M^\perp}(G_{j+1})\right)  = \rk_M(E)+\rk_{M^\perp}(E) = r+ r^\perp= \abs{E}.\]
Each term in this sum is positive, so we must have $\ell\leq \abs{E}-1$. To prove this bound is tight, we exhibit a bichain with length $\abs{E}-1$. Since $M$ is not a uniform matroid, by Proposition \ref{prop:uniform_flats}, 
we can assume there exists some 
corank $1$ flat $F_{r-1}$ and corank $1$ dual flat $G_{r^\perp-1}$ such that $F_{r-1}\cup G_{r^\perp-1} = E$. 
Let 
\[F_1\subseteq F_2 \subseteq \ldots \subseteq F_{r-2}\subseteq F_{r-1} \quad \text{and} \quad G_1\subseteq G_2 \subseteq \ldots \subseteq G_{r^\perp-2} \subseteq G_{r^\perp-1} \]
be maximal increasing chains of proper, non-empty flats and dual flats respectively. Then 
\[F_1\vert E  \leq \ldots \leq F_{r-1}\vert E \leq F_{r-1}\vert G_{r^\perp-1} \leq E \vert G_{r^\perp-1}\leq \ldots \leq E\vert G_1\]
is a bichain of length $\abs{E}-1$.
\end{proof}

\subsection{Unmixed biflats complex}\label{sec:unmixedbackground}
We discuss the \emph{unmixed} biflats of a matroid. The unmixed biflats are a type of biflat first defined in
 Section 4.3 of \cite{ADHCombo}. We use the subposet formed by them to define the \emph{unmixed biflats complex}. 

\begin{defn}
     A biflat $F\vert G$ is \emph{mixed} if both $F$ is a proper flat of $M$ and $G$ is a proper flat of $M^\perp$. Otherwise, we say the biflat $F\vert G$ is \emph{unmixed}. 
     We denote the set of unmixed biflats as 
     \[\BF_{M,M^\perp}^{u} \coloneqq \{F\vert G  \colon F=E \text{ or } G= E\} \subseteq \BF_{M,M^\perp}.\]
\end{defn}

 \noindent As $\BF_{M,M^\perp}^{u}$ is a subset of $\BF_{M,M^\perp}$, the set of unmixed biflats inherits a poset structure. Once again, the set and the poset share the same notation. We call the order complex of $\BF_{M,M^\perp}^{u}$ the \emph{unmixed biflats complex} and denote it by $\Delta(\BF_{M,M^\perp}^u)$.

Because biflats are ordered with respect to inverse inclusion on dual flats, the poset of unmixed biflats is isomorphic to the ordinal sum of $\FF$ and 
$\left(\FFd\right)^{\op}$. 
For any poset $P$, the order complex $\Delta(P^\op) = \Delta (P) $, so the following lemma should be unsurprising.

\begin{lem}
  \label{lem:unmixed_join}
    The order complex of unmixed biflats is isomorphic to the join of $\Delta_M$ and $\Delta_{M^\perp}$. 
\end{lem}
\begin{proof}
Consider the map $\phi:    \BF^u_{M, M^\perp} \to \FF \oplus (\FFd)^\op  $ given by 
\[\phi(F\vert G) =
\begin{cases}
    F &\text{if } G=E \\
    G &\text{if } F=E. 
\end{cases}
\]
One can check that $\phi$ is a poset isomorphism and hence, by Lemma ~\ref{lem:join}, \[\Delta(\BF_{M,M^\perp}^u)  \cong \Delta_M * \Delta_{M^\perp}.\qedhere\]
\end{proof}

With our terminology now in place, the following corollary is a rephrasing of Proposition \ref{prop:uniform_flats}.

\begin{cor}\label{cor:uniform_unmixed}
    A matroid $M$ has no mixed biflats if and only if $M$ is a uniform matroid.
\end{cor}

\subsection{Conormal complex}\label{sec:conormalbackgroundsection}
Lastly, we define the \emph{conormal complex} $\Delta_{M,M^\perp}$. This is the simplicial complex associated to the \emph{conormal fan}. We provide a more an in-depth study of the conormal fan in Section ~\ref{sec:conormal_back}. We again refer to \cite{ADH} and \cite{ADHCombo} as sources for our definitions. 

\begin{defn}\label{def:biflag}
A \emph{biflag}, $\F\vert\G$, of a matroid $M$ is a bichain satisfying the restriction that 
\[\bigcup_{F\vert G\in \F\vert\G}F\cap G \neq E. \]
The \emph{conormal complex} $\Delta_{M,M^\perp}$ of a matroid is the simplicial complex whose faces are biflags of $M$.
\end{defn}

\noindent It is shown in Proposition $2.15$ of \cite{ADH}, that a bichain is a biflag if and only if it satisfies the following \emph{gap condition}. A bichain \[\F\vert\G = \{F_1\vert G_1 \leq F_2\vert G_2\leq \ldots \leq F_\ell\vert G_\ell\}\]satisfies the gap condition if, when we let $F_0\vert G_0 = \emptyset\vert E$ and $F_{\ell+1}\vert G_{\ell+1}= E\vert\emptyset$, there exists an index $0\leq j\leq \ell$ such that $F_j\cup G_{j+1}\neq E$.

The requirement that faces of the conormal complex be biflags usually complicates its structure. In fact, as we will see in Example \ref{ex:big_triangle}, the conormal complex often fails to be a flag simplicial complex. However, if $M$ is a uniform matroid, then the conormal complex of $M$, biflats complex of $M$ and unmixed biflats complex of $M$ are all equal. In fact, these equalities are a characterization of uniform matroids. 




\begin{prop} \label{prop:tfae}
For a loopless and coloopless matroid $M$, the following are equivalent:
\begin{itemize}
\item $M$ is a uniform matroid
\item The biflats complex $\Delta\left(\BF_{M,M^\perp}\right)$ is the unmixed biflats complex $\Delta\left(\BF^u_{M,M^\perp}\right)$.
\item The biflats complex $\Delta\left(\BF_{M,M^\perp}\right)$ is the conormal complex $\Delta_{M,M^\perp}$.
\end{itemize}
\end{prop}
\begin{proof}
    The equivalence of the first and second bullet is Corollary \ref{cor:uniform_unmixed}. 
    To show the second and third bullet are equivalent, we will prove that any matroid
    $M$ has a mixed biflat if and only if there exists a bichain of $M$ which is not a biflag. 
    If $F\vert G$ is a mixed biflat of $M$, then $F\vert E \leq E \vert G$ is a bichain of $M$ which fails the gap condition and hence is not a biflag of $M$. Now suppose that $\F\vert \G = F_1\vert G_1 \leq F_2 \vert G_2 \leq \ldots \leq F_k\vert G_k$ is a bichain of $M$ which is not a biflag. If any of the biflats of $\F\vert \G$ are mixed, we are done. If not, then there exists an index $1\leq i< k$ such that $F_i\vert G_i = F\vert E$ and $F_{i+1}\vert G_{i+1} = E\vert G$ for some proper flat $F$ and proper dual flat $G$. But as $\F\vert \G$ is not a biflag, the gap condition tells us that $F\cup G =E$ and thus $F\vert G$ is a mixed biflat of $M$.
\end{proof}

\subsection{Examples} \label{sec:examples}
Throughout this paper, we will refer to \emph{graphic matroids} as examples. A graphic matroid $M$ is specified from a graph $G$ in the following manner. The ground set $E$ is given by the edge set of $G$. The rank of a subset of edges is the maximum size of an acyclic subset. 
Hence, the flats $\FF$ are the edge subsets $F \subseteq E$ which are transitively closed:  if $F$ contains a path of edges from vertex $v$ to vertex $v'$, and if $e=\{v,v'\} \in E$, then $e \in F$. If $G$ is a planar graph, then $M^\perp$ is the graphic matroid of the dual graph of $G$.

Our examples will gradually increase in complexity to highlight different features of the conormal complex, biflats complex and unmixed biflats complexes.
\begin{example}\label{ex:small_triangle}
Let $M$ be the matroid defined by the graph below; its dual graph, which defines the dual matroid, is also below.  The flats of $M$ are $\mathbf{F} = \{\emptyset, 12, 3, 4, E\}$ and the flats of the dual matroid are $\mathbf{F}^\perp = \{\emptyset, 1, 2, 34, E\}$, where we forego the set braces and commas when writing a flat. 
    \begin{center}
    \begin{tikzpicture} [scale=.8]
\node at (-1, 0.5) {\Large $M$};

\node[circle,draw] (a) at (0,0) {};
\node[circle,draw] (b) at (4,0) {};
\node[circle,draw] (c) at (2,2){};

\draw (a) to[bend right] node[below]{$1$} (b) ;
\draw (a) to[bend left] node[above]{$2$} (b) ;
\draw (a) to node[left]{$3$} (c) ;
\draw (b) to node[right]{$4$} (c) ;

\end{tikzpicture} \hspace{1in}
    \begin{tikzpicture} [scale=.8]
\node at (-1, 0.5) {\Large $M^\perp$};

\node[circle,draw] (a) at (0,0) {};
\node[circle,draw] (b) at (4,0) {};
\node[circle,draw] (c) at (2,2){};

\draw (a) to[bend right] node[below]{$3$} (b) ;
\draw (a) to[bend left] node[above]{$4$} (b) ;
\draw (a) to node[left]{$1$} (c) ;
\draw (b) to node[right]{$2$} (c) ;

\end{tikzpicture}
    \end{center}

The biflats of this matroid are
$\BF = \{ 12 \vert E, 12\vert 34, 3 \vert E, 4\vert E,  E\vert 1, E\vert 2, E\vert 34\}.
$
In the figure below, the Hasse diagram of $\BF_{M, M^\perp}$ is shown on the left and its order complex $\Delta(\BF_{M, M^\perp})$ is shown on the right. All elements of $\BF_{M, M^\perp}$ and all vertices of $\Delta(\BF_{M, M^\perp})$ are labeled. Any unlabeled intersections between edges are coincidental and do not correspond to poset elements or vertices.

\begin{center}
  \begin{tikzpicture} [scale=.35,auto=left]
 \node (12e) at (9,1) {$12\vert E$};
  \node (3e) at (3,1) {$3\vert E$};
  \node (4e) at (-3,1) {$4\vert E$};
  \node (1234) at (9,6) {$12 \lvert 34$};
  \node (e1) at (-3,11) {$E \vert 1$};
  \node (e2) at (3,11) {$E \vert 2$};
    \node (e34) at (9,11) {$E \vert 34$};
  \foreach \from/\to in {12e/e2, 3e/e1, 3e/e34, 4e/e2}
    \draw (\from) -- (\to);  
    \draw[] (1234) -- (12e);
    \draw[] (1234) -- (e34);
    \draw[] (4e) -- (e1);
    \draw[] (3e) to (e2); 
    \draw[] (4e) to (e34);
    \draw[] (12e) to (e1);

\node (o4e) at (27, 12) {$4\vert E$};
\node (oe1) at (21, 12) {$E\vert 1$};
\node (oe34) at (30 ,6) {$E \vert 34$};
\node (o3e) at (18, 6) {$3 \vert E$};
\node (o12e) at (27, 0) {$12\vert E$};
\node[right] (nontriv) at (32, 0) {$12\vert 34$};
\node (oe2) at (21, 0) {$E\vert 2$};

 \foreach \from/\to in {oe1/o4e, oe1/o3e,  oe2/o3e, oe2/o12e, oe34/o4e, oe34/o3e}
    \draw (\from) -- (\to); 

\draw (oe1) to (o12e);
\draw (oe2) to (o4e);
\fill[draw=black!100,fill=black!20,fill opacity=0.5] (30, 5.5) -- (28.1, 0) -- (32, 0) --cycle;

\end{tikzpicture}
\end{center}

\noindent Note that the biflats complex $\Delta(\BF_{M, M^\perp})$ is not pure since
    \[3 \vert E < E \vert 34  \qquad \text { and } \qquad
    12 \vert E <  12\vert 34 < E \vert 34\]
    are two maximal chains of $\BF_{M, M^\perp}$ with different lengths. These two chains form two facets of $\Delta(\BF_{M, M^\perp})$ with different dimensions.

The set of unmixed biflats is $\BF^u_{M, M^\perp} = \BF_{M, M^\perp} \setminus \{12\vert 34\}$. The corresponding poset $\BF^u_{M, M^\perp}$ and its order complex $\Delta(\BF^u_{M, M^\perp})$ are depicted below. As before, any unlabeled intersections between edges are coincidental and do not correspond to poset elements or vertices.

\begin{center}
\begin{tikzpicture} [scale=.35,auto=left]
  \node (12e) at (6,1) {$12\vert E$};
  \node (3e) at (0,1) {$3\vert E$};
  \node (4e) at (-6,1) {$4\vert E$};
  \node (e1) at (-6,11) {$E \vert 1$};
  \node (e2) at (0,11) {$E \vert 2$};
    \node (e34) at (6,11) {$E \vert 34$};
  \foreach \from/\to in {12e/e2, 3e/e1, 3e/e34, 4e/e2,  12e/e34, e1/4e, e2/3e, 12e/e1, 4e/e34}
    \draw (\from) -- (\to);    

\node (o4e) at (27, 12) {$4\vert E$};
\node (oe1) at (21, 12) {$E\vert 1$};
\node (oe34) at (30 ,6) {$E \vert 34$};
\node (o3e) at (18, 6) {$3 \vert E$};
\node (o12e) at (27, 0) {$12\vert E$};
\node (oe2) at (21, 0) {$E\vert 2$};

 \foreach \from/\to in {oe1/o4e, oe1/o3e, oe1/o12e, oe2/o4e, oe2/o3e, oe2/o12e, oe34/o4e, oe34/o3e, oe34/o12e}
    \draw (\from) -- (\to);  
\end{tikzpicture}
\end{center}

\noindent In contrast to the biflats complex $\Delta(\BF_{M, M^\perp})$, the unmixed biflats complex $\Delta(\BF^u_{M, M^\perp})$ is a pure simplicial complex.

\end{example}

We again compare these complexes with a slightly more complicated example. In this example, the poset of biflats will fail to be a ranked poset.

\begin{example}\label{ex:k4-edge}
Let the matroid $M$ defined by the graph below. The dual matroid is defined by the dual graph, which is included in the figure below.
\begin{center}
    \begin{tikzpicture}
\node at (-1, 1.25) {\Large $M$};

\node[circle,draw] (a) at (0,0) {};
\node[circle,draw] (b) at (2.5,0) {};
\node[circle,draw] (c) at (0,2.5) {};
\node[circle,draw] (d) at (2.5,2.5) {};

\draw (c) to node[above]{$1$} (d) ;
\draw (c) to node[left]{$2$} (a) ;
\draw (a) to node[below]{$3$} (b) ;
\draw (b) to node[right]{$4$} (d) ;
\draw (a) to node[left]{$5$} (d) ;

\end{tikzpicture} \hspace{1in}
        \begin{tikzpicture}
\node at (-1, 1.25) {\Large $M^\perp$};

\node[circle,draw] (a) at (0,0) {};
\node[circle,draw] (b) at (0,2.5) {};
\node[circle,draw] (c) at (3.75,1.25){};

\draw (b) to[bend left] node[above]{$1$} (c) ;
\draw (b) to[out=-45, in=175] node[above]{$2$} (c) ;
\draw (a) to[bend right] node[below]{$4$} (c) ;
\draw (a) to[out=45, in=-175] node[below]{$3$} (c) ;
\draw (b) to node[left]{$5$} (a) ;

\end{tikzpicture}
\end{center}

This matroid has two mixed biflats, $125\vert 34$ and $345\vert 12$. The Hasse diagram of the poset of biflats $\BF_{M,M^\perp}$ is drawn below. The poset of unmixed biflats is the subposet which does not include any mixed biflats. The mixed biflats are highlighted by blue rectangles.

\begin{center}
\scalebox{0.8}
{\begin{tikzpicture}
\node (1E) at (1,0) {\Large $1\vert E$};
\node (2E) at (3, 0) {\Large $2\vert E$};
\node (3E) at (5,0) {\Large $3\vert E$};
\node (4E) at (7,0) {\Large $4\vert E$};
\node (5E) at (9,0) {\Large $5\vert E$};

\node (125E) at (0, 3) {\Large $125\vert E$};
\node (13E) at (2, 3) {\Large $13\vert E$};
\node (14E) at (4, 3) {\Large $14\vert E$};
\node (23E) at (6, 3) {\Large $23\vert E$};
\node (24E) at (8, 3) {\Large $24\vert E$};
\node (345E) at (10,3) {\Large $345\vert E$};

\node[draw=blue] (12534) at (-0.1, 5) {\Large $125\vert 34$};
\node[draw=blue] (34512) at (10.1,5) {\Large $345\vert 12$};

\node (E5) at (5, 7) {\Large $E\vert 5$};
\node (E12) at (10, 7) {\Large $E\vert 12$};
\node (E34) at (0, 7) {\Large $E\vert 34$};

\draw[red,dashed] (125E) to[bend left=45] (E34);
\draw[red,dashed] (345E) to[bend right=45] (E12);
\draw (5E)--(345E);
\draw (5E)--(125E);
\draw (4E)--(345E);
\draw (4E)--(24E);
\draw (4E)--(14E);
\draw (3E)--(345E);
\draw (3E)--(23E);
\draw (3E)--(13E);
\draw (345E)--(34512);
\draw (345E)--(E5);
\draw (345E)--(E34);
\draw (34512)--(E12);
\draw (2E)--(24E);
\draw (2E)--(23E);
\draw (2E)--(125E);
\draw (24E)--(E12);
\draw (24E)--(E5);
\draw (24E)--(E34);
\draw (23E)--(E12);
\draw (23E)--(E5);
\draw (23E)--(E34);
\draw (1E)--(14E);
\draw (1E)--(13E);
\draw (1E)--(125E);
\draw (14E)--(E12);
\draw (14E)--(E5);
\draw (14E)--(E34);
\draw (13E)--(E12);
\draw (13E)--(E5);
\draw (13E)--(E34);
\draw (125E)--(E12);
\draw (125E)--(E5);
\draw (125E)--(12534);
\draw (12534)--(E34);
\end{tikzpicture}}
\end{center}

This matroid has $16$ bichains which are not biflags. Of these, the bichains \[\{125\vert E \leq E\vert 34\}  \quad \text{and} \quad \{345\vert E \leq E\vert 12\}\]
are inclusion-minimal. 
Thus the conormal complex $\Delta_{M,M^\perp}$ is the simplicial complex obtained by deleting the edges corresponding to $\{125\vert E \leq E\vert 34\}$ and $\{345\vert E \leq E\vert 12\}$ from $\Delta\left(\BF_{M,M^\perp}\right)$. 
The faces of $\Delta_{M,M^\perp}$ correspond to chains in the poset $\BF_{M, M^\perp}$ (depicted above) which do not contain a red (dashed) relation as a subchain. Notice that all maximal chains, which don't contain a red (dashed) relation as a subchain, have length $3$. This means $\Delta_{M,M^\perp}$ is a $2$ dimensional simplicial complex.

Finally, note that $\BF_{M,M^\perp}$ is not a ranked poset. This is evidenced by the saturated chains of biflats
\[
   1 \vert E < 125 \vert E < 125 \vert 34 < E \vert 34  \qquad \text { and } \qquad
   1 \vert E < 13 \vert E <  E \vert 34.
\]

\end{example}

Our last example is the most complicated. Here, the conormal complex will fail to be flag and the biflats complex will fail to be non-pure shellable or even sequentially Cohen--Macaulay. 

\begin{example}
  \label{ex:big_triangle}
Let $M$ and its dual be defined by the following graphs.

\begin{center}
\begin{tikzpicture} [scale=.8]
\node at (-1, 0.5) {\Large $M$};

\node[circle,draw] (a) at (0,0) {};
\node[circle,draw] (b) at (4,0) {};
\node[circle,draw] (c) at (2,2){};
\node[circle,draw] (d) at (5,2){};

\draw (a) to[bend right] node[below]{$1$} (b) ;
\draw (a) to[bend left] node[above]{$2$} (b) ;
\draw (a) to node[left]{$3$} (c) ;
\draw (b) to node[right]{$4$} (c) ;
\draw (b) to node[right]{$5$} (d);
\draw (c) to node[above]{$6$} (d);

\end{tikzpicture} \hspace{1in}
\begin{tikzpicture}[scale=.8]
\node at (-1, 0.5) {\Large $M^\perp$};

\node[circle,draw] (a) at (0,0) {};
\node[circle,draw] (b) at (4,0) {};
\node[circle,draw] (c) at (2,2){};
\node[circle,draw] (d) at (5,2){};

\draw (a) to[bend right] node[below]{$6$} (b) ;
\draw (a) to[bend left] node[above]{$5$} (b) ;
\draw (a) to node[left]{$4$} (c) ;
\draw (b) to node[right]{$3$} (c) ;
\draw (b) to node[right]{$1$} (d);
\draw (c) to node[above]{$2$} (d);

\end{tikzpicture}
\end{center}

Its poset of biflats $\BF_{M, M^\perp}$ is below. 

\begin{center}
\scalebox{.6}{
\begin{tikzpicture}
\node (12e) at (0,0) {\Large $12\vert E$};
\node (3e) at (3,0) {\Large $3\vert E$};
\node (4e) at (6,0) {\Large$4\vert E$}; 
\node (5e) at (9, 0) {\Large $5 \vert E$};
\node (6e) at (12, 0) {\Large $6 \vert E$};

\node (12_3456) at (-3, 3) {\Large $12 \vert 3456$};
\node (1234e) at (0, 3) {\Large $1234 \vert E$};
\node (125e) at (3, 3) {\Large $125 \vert E$};
\node (126e) at (6,3) {\Large $126 \vert E$};
\node (35e) at (9, 3) {\Large $35 \vert E$};
\node (36e) at (12, 3) {\Large $36 \vert E$};
\node (456e) at (15,3) {\Large $456 \vert E$};

\node (1234_156) at (1.75, 6) {\Large $1234 \vert 156$};
\node (1234_256) at (-1, 6) {\Large $1234 \vert 256$};
\node (1234_3456) at (-10, 6) {\Large $1234 \vert 3456$};
\node (125_3456) at (-7,6) {\Large $125 \vert 3456$};
\node (126_3456) at (-4, 6) {\Large $126 \vert 3456$};
\node (456_123) at (15, 6) {\Large $456 \vert 123$};

\node (1234_56) at (-5, 9) {\Large $1234 \vert 56$};
\node (e123) at (15, 9) {\Large $E \vert 123$};
\node (e14) at (12, 9) {\Large $E \vert 14$};
\node (e156) at (6,9) {\Large $E \vert 156$};
\node (e24) at (9, 9) {\Large $E \vert 24$};
\node (e256) at (3, 9) {\Large $E \vert 256$};
\node (e3456) at (-2,9) {\Large $E\vert 3456$};

\node (e1) at (12, 12) {\Large $E \vert 1$};
\node (e2) at (9, 12) {\Large $E \vert 2$};
\node (e3) at (6,12) {\Large $E \vert 3$};
\node (e4) at (3, 12) {\Large $E \vert 4$};
\node (e56) at (0, 12) {\Large $E \vert 56$};

 
  \foreach \from/\to in 
{12e/12_3456, 12e/1234e, 12e/125e, 12e/126e, 
3e/1234e, 3e/35e, 3e/36e, 
4e/1234e, 4e/456e, 
5e/125e, 5e/35e, 5e/456e, 
6e/126e, 6e/36e, 6e/456e,
12_3456/1234_3456, 12_3456/125_3456, 12_3456/126_3456, 
1234e/1234_156, 1234e/1234_256, 1234e/1234_3456,
125e/125_3456, 126e/126_3456, 456e/456_123,
1234_156/1234_56, 1234_256/1234_56, 
456_123/e123,
1234_3456/e3456, 126_3456/e3456, 125_3456/e3456, 
1234_156/e156, 1234_256/e256, 1234_3456/1234_56,
1234_56/e56, 
e156/e1, e156/e56, 
e256/e2, e256/e56, 
e3456/e3,  e3456/e4, e3456/e56, 
e14/e1, e14/e4, e24/e2, e24/e4,
e123/e1, e123/e2, e123/e3}
    \draw (\from) -- (\to);  

  \foreach \from/\to in 
{1234e/e24, 1234e/e14, 1234e/e123,
125e/e256, 125e/e156, 125e/e24, 125e/e14, 125e/e123,
126e/e256, 126e/e156, 126e/e24, 126e/e14, 126e/e123,
35e/e3456, 35e/e256, 35e/e156, 35e/e24, 35e/e14, 35e/e123,
36e/e3456, 36e/e256, 36e/e156, 36e/e24, 36e/e14, 36e/e123,
456e/e3456, 456e/e256, 456e/e156, 456e/e24, 456e/e14}
    \draw (\from) -- (\to);  

  \foreach \from/\to in 
{126e/126_3456,126_3456/e3456,e3456/e56,126e/e256,e256/e56}
\draw[] (\from) -- (\to);
\end{tikzpicture}
}
\end{center}

The conormal complex of this matroid, $\Delta_{M, M^\perp}$ is \emph{not flag}. To see this, note that $\{12\vert E \leq 1234\vert 3456\}$, $\{1234\vert 3456\leq E\vert 56\}$, and $ \{12\vert E \leq E\vert 56\}$ 
are all biflags but
\[\{12\vert E \leq 1234\vert 3456 \leq E\vert 56\}\]
is not a biflag. This amounts to having the boundary of a triangle in the conormal complex $\Delta_{M,M^\perp}$ which is not filled in.

It can be checked computationally that the biflats complex $\Delta(\BF_{M,M^\perp})$ is not \emph{non-pure shellable}. Non-pure shellability is an analogue of shellability for non-pure complexes introduced by Bj\"orner and Wachs \cite{BW}. Like the usual notion of shellability, non-pure shellability has various algebraic and numerical implications. Specifically,
\[\text{non-pure shellability} \implies \text{sequentially Cohen--Macaulay} \implies \text{positive $h$-triangle.} \]

The $h$-triangle of a simplicial complex $\Delta$ is a triangular vector of numbers constructed from $\Delta$. If any of these numbers are negative, $\Delta$ cannot be non-pure shellable nor sequentially Cohen--Macaualy.  
Below, we compute that the $h$-triangle of $\Delta(\BF_{M,M^\perp})$  is
\[\begin{matrix*}
  0&   &   &    &   &\\
  0& 0 &   &    &   & \\
  0& 0 & 0 &    &   &\\
  0& 0 & 0 &  0 &   & \\
  0& 4 & 58&  76& 16& \\
  1& 21& 46& -11&  4& 0.
\end{matrix*}\]
As this $h$-triangle contains a negative entry, the biflats complex is not non-pure shellable nor sequentially Cohen--Macaulay.
\end{example}

\begin{remark}
  \label{rmk:shellable}
In each of the above examples, the conormal complex is a shellable simplicial complex. As the conormal complex of a matroid $M$ is homeomorphic to $\Delta_M*\Delta_{M^\perp}$, it has the homotopy type of a shellable simplicial complex. It is unknown whether all conormal complexes are shellable.
\end{remark}

\subsection{Fan structures}\label{sec:conormal_back}
The conormal complex $\Delta_{M,M^\perp}$ of a matroid $M$ is the simplicial complex associated to the \emph{conormal fan} of $M$, denoted $\Sigma_{M,M^\perp}$, which was introduced and studied by Ardila, Denham and Huh in \cite{ADH} and \cite{ADHCombo}. In this section, we give a short introduction to the conormal fan of a matroid as well as a geometric proof of the fact that $\Delta_{M,M^\perp}$ is homeomorphic to the join of Bergman complexes $\Delta_M * \Delta_{M^\perp}$ (Proposition \ref{prop:homeomorphic}). No tropical geometry background is needed to understand this section. Although our terminology is motivated by tropical geometry, we will not use in any technical results from the field.

As the conormal fan is a particular simplicial fan, we start by recalling the definition of a simplicial fan. Let $V$ be a finite dimensional real vector space. A $k$-dimensional simplicial cone  $\sigma \subset V$ is a subset that can be written as 
\[\sigma= \cone\{v_1,\ldots, v_k\} \coloneqq \{\lambda_1v_1+\ldots + \lambda_k v_k : \lambda_i\geq 0 \}\subset V\] 
where $v_1,\ldots, v_k$ are $k$ linearly independent vectors in $V$. A cone $\tau$ is a \emph{face} of $\sigma$ if it can be written as $\tau = \cone\{v_{i_1},\ldots, v_{i_\ell}\}$ for some subset $\{v_{i_1},\ldots, v_{i_\ell}\} \subset \{v_1,\ldots, v_k\}$. We declare $\cone(\emptyset)= \{0\}$ so that $\{0\}$ is a face of every cone. A \emph{simplicial fan} $\Sigma$ in $V$ is a non-empty collection of simplicial cones $\sigma \in \Sigma$ in $V$ such that
\begin{itemize}
\item If $\sigma \in \Sigma$ and $\tau$ is a face of $\sigma$, then $\tau \in \Sigma$, and
\item Every two cones $\sigma, \tau \in \Sigma$ intersect exactly along a common face.
\end{itemize}
We say that $\Sigma$ is \emph{complete} if the union of all of its cones is equal to the whole vector space $V$.

The conormal fan is a restriction of a larger fan called the \emph{bipermutohedral fan} $\Sigma_{E,E}$. Therefore, we first define the bipermutohedral fan $\Sigma_{E,E}$. The bipermutohedral fan is a complete, simplicial fan inside the product of \emph{tropical projective tori} whose cones correspond to biflags of \emph{bisubsets}. 
\begin{defn}
Given a ground set $E$, the \emph{tropical projective torus} $N_E$ is the $(n-1)$-dimensional real vector space
\[\R^E/\R\mathbf{e}_E, \quad\quad \mathbf{e}_E\coloneqq\sum_{i\in E}e_i. \]
\end{defn}
If we relax our definition of biflat to not just consist of flats but instead any non-empty subset of the ground set, we arrive at the definition of a bisubset.
\begin{defn}
A \emph{bisubset} $S\vert T$ consists of non-empty subsets $S,T\subseteq E$ such that at least one of them is not equal to $E$ and
\[ S\cup T = E.\]
\end{defn}
\noindent Just like biflats, we can form bichains and biflags of bisubsets. The definitions for bichains and biflags of bisubsets are identical to those given in Definitions ~\ref{biflat_order} and ~\ref{def:biflag}.

Following Section $2.6$ of \cite{ADH}, we define the bipermutohedral fan by its rays and cones. For any subset $S\subseteq E$, define the vectors $\mathbf{e}_S$ and $\mathbf{f}_S$ to be the image of $\sum_{i\in S}e_i$ in the first and second copy of $N_E$, respectively. Then define $\mathbf{e}_{S|T}$ to be the vector $(\mathbf{e}_S, \mathbf{f}_T)$ in $N_E\times N_E$.
\begin{defn}
The \emph{bipermutohedral fan} $\Sigma_{E,E}$ is the $(2n-2)$-dimensional simplicial fan in $N_E\times N_E$ with cones
\[\sigma_{\mathcal{S}\vert \mathcal{T}} = \cone\left\{\mathbf{e}_{S\vert T}\right\}_{S\vert T\in \mathcal{S}\vert \mathcal{T}}, \quad\text{for biflags of bisubsets }\mathcal{S}\vert \mathcal{T}. \]
\end{defn}

\noindent The bipermutohedral fan is a rich combinatorial object with many different interpretations. For an exploration of these interpretations and their consequences, see Section $2$ of \cite{ADH}.

\begin{defn}
The tropical linear space of a matroid is defined to be
\[\trop(M) = \left\{ z \in N_E :  \min_{i\in C}(z_i) \text{ is achieved at least twice for every circuit $C$ of $M$} \right\}\subseteq N_E. \]
\end{defn}

\begin{defn}
The \emph{conormal fan} of $M$, denoted $\Sigma_{M,M^\perp}$, is the intersection of $\trop(M)\times \trop(M^\perp)$ with $\Sigma_{E,E^\perp}$. By Proposition $3.6$ of \cite{ADH}, $\Sigma_{M,M^\perp}$ is the subfan of $\Sigma_{E,E^\perp}$ whose cones are
\[\sigma_{\FG} =  \cone\left\{\mathbf{e}_{F\vert G}\right\}_{F\vert G\in \mathcal{F}\vert \mathcal{G}}, \text{  for biflags of biflats $\F\vert\G$ of $M$.}\] 
\end{defn}

\begin{remark} \label{rem:loops}
If $M$ contains a loop $a\in E$ then $\trop(M)$ will be empty since $\{a\}$ will be a circuit and thus \[\trop(M) \subseteq \left\{ z \in N_E :  \min_{i\in \{a\}}(z_i) \text{ is achieved at least twice.} \right\} = \emptyset. \] 
Similarly if $M$ contains a coloop, $M^\perp$ will contain a loop and $\trop\left(M^\perp\right)$ will be empty. Thus if $M$ contains a loop or a coloop then $\trop(M)\times \trop(M^\perp)$ will be empty. Hence, we require our matroids to be loopless and coloopless to eliminate the case when the the conormal fan of $M$ has empty support.
\end{remark}

We consider one more subdivision of $\trop(M)\times \trop\left(M^\perp\right)$. This subdivision is the product of the \emph{Bergman fans} of $M$ and $M^\perp$. The Bergman fan of a matroid, first introduced in \cite{AK}, is a well studied fan whose support is $\trop(M)$. 
\begin{defn}
The \emph{Bergman fan} $\Sigma_M$ of a matroid $M$ is the simplicial fan in $N_E$ whose support is $\trop(M)$ and cones are
\[\sigma_{\mathcal{F}} =  \cone\left\{\mathbf{e}_{F}\right\}_{F\in \mathcal{F}}, \text{  for flags of flats $\mathcal{F}$ of $M$.} \]
\end{defn}
\noindent As $\Sigma_M$ is a subdivision of $\trop(M)$ and $\Sigma_{M^\perp}$ is a subdivision of $\trop\left(M^\perp\right)$, $\Sigma_M\times \Sigma_{M^\perp}$ will be a subdivision of $\trop(M)\times \trop\left(M^\perp\right)$.

To every simplical fan, there is an associated simplical complex whose faces are the cones of the fan. This complex is combinatorially equivalent to the complex obtained by intersecting the fan with a sphere centered around the origin. The conormal complex $\Delta_{M,M^\perp}$ is the simplical complex associated with the conormal fan $\Sigma_{M,M^\perp}$ while $\Delta_M*\Delta_{M^\perp}$ is the simplicial complex associated with the product of Bergman fans $\Sigma_M\times \Sigma_{M^\perp}$.

This section is summarized in the following diagram. Here, the downward diagonal arrows represent subdividing $\trop(M)\times \trop\left(M^\perp\right)$ with a fan structure. The vertical maps are given by intersecting the fan with a sphere around the origin in $N_E\times N_E$. The map $\varphi$ is the inclusion given by viewing biflags as bichains while the map $\psi$ is the simplicial isomorphism of $\Delta_M*\Delta_M$ with the unmixed biflats complex. 
\[\begin{tikzcd}
	& {\trop(M)\times\trop\left(M^\perp\right)} \\
	{\Sigma_{M,M^\perp}} && {\Sigma_M\times \Sigma_{M^\perp}} \\
	{\Delta_{M,M^\perp}} & {\Delta(\BF_{M,M^\perp})} & {\Delta_M*\Delta_{M^\perp}}
	\arrow[squiggly, from=2-1, to=3-1]
	\arrow["\varphi", hook, from=3-1, to=3-2]
	\arrow[squiggly, from=2-3, to=3-3]
	\arrow["\psi"', hook', from=3-3, to=3-2]
	\arrow[squiggly, from=1-2, to=2-1]
	\arrow[squiggly, from=1-2, to=2-3]
\end{tikzcd}\]

In the Sections ~\ref{sec:unmixed} and ~\ref{sec:conormalcomplex}, we show that $\varphi$ and $\psi$ induce different deformation retracts of $\Delta\left(\BF_{M,M^\perp}\right)$ onto $\Delta_{M,M^\perp}$ and $\Delta_M * \Delta_{M^\perp}$ by giving different sequences of elementary collapses.

From the above discussion, the following proposition follows immediately.
\begin{prop}
  \label{prop:homeomorphic}
The conormal complex of a matroid $\Delta_{M,M^\perp}$ is homeomorphic to the join of Bergman complexes $\Delta_M * \Delta_{M^\perp}$.
\end{prop}

\section{Unmixed biflats complex}\label{sec:unmixed}

For this section, we fix a loopless and coloopless matroid $M$ on ground set $E$ and let $M^\perp$ be its dual matroid. We drop subscripts and let $\BF$ refer to the poset of biflats of $M$. 
In this section, we prove the first of our main theorems.
\begin{thmSpecial}
    Given a matroid $M$, there is an explicit sequence of elementary collapses
    \[\Delta(\BF)\diagarrow \Delta_1 \diagarrow \cdots \diagarrow \Delta(\BF_{M,M^\perp}^u)\cong \Delta_M * \Delta_{M^\perp} \]
    from the biflats complex $\Delta(\BF_{M,M^\perp})$ onto the join of the Bergman complexes $\Delta_M * \Delta_{M^\perp}$. 
\end{thmSpecial}

\noindent Recall that Theorem \ref{thm:unmixed} implies that $\Delta(\BF)$ is simple homotopy equivalent to $\Delta(\BF_{M,M^\perp})$. Our sequence of elementary collapses will come from ordering the mixed biflats and deleting them, one at a time, from the biflats complex using Lemma \ref{lem:pos_collapse}.

Let $\BF^m\coloneqq \BF\setminus \BF^{u}$ be the set of mixed biflats. We give $\BF^m$ a poset structure via the inclusion $\BF^m\subseteq \BF$. Let $\lessdot$ be any linear extension of the partial ordering $\leq$ on $\BF^m$. That is, $\lessdot$ is a total ordering of $\BF^m$ such that if $F\vert G$ and $F'\vert G'$ are mixed biflats with $F\subseteq F'$ and $G\supseteq G'$ then $F\vert G \lessdot F'\vert G'$.  

The total ordering $\lessdot$ will dictate the order in which we delete the mixed biflats from $\Delta(\BF)$. We will first illustrate this with an example and then prove the general case.

\begin{example} \label{ex:mixed_removal}
Let $M$ be the matroid defined in Example \ref{ex:big_triangle}. We want to delete all of the mixed biflats from $\Delta(\BF)$ through a sequence of elementary collapses, in accordance with the total order $\lessdot$. The subposet of mixed biflats, $\BF^m$ is shown below.
\begin{center}
    \scalebox{0.7}{\begin{tikzpicture}
\node (12_3456) at (-3, 0) {\Large $12 \vert 3456$};

\node (1234_156) at (3, 2) {\Large $1234 \vert 156$};
\node (1234_256) at (6, 2) {\Large $1234 \vert 256$};
\node (1234_3456) at (0, 2) {\Large $1234 \vert 3456$};
\node (125_3456) at (-6,2) {\Large $125 \vert 3456$};
\node (126_3456) at (-3, 2) {\Large $126 \vert 3456$};
\node (456_123) at (9, 2) {\Large $456 \vert 123$};
\node (1234_56) at (3, 4) {\Large $1234 \vert 56$};

 
  \foreach \from/\to in 
{12_3456/1234_3456, 12_3456/125_3456, 12_3456/126_3456, 
1234_156/1234_56, 1234_256/1234_56, 1234_3456/1234_56}
    \draw (\from) -- (\to);  
\end{tikzpicture}}
\end{center}

Suppose that $\lessdot$ is a total order such that $12\vert 3456 \lessdot 125\vert 3456 \lessdot \ldots \lessdot 1234\vert 56$.

We will first delete $12\vert 3456$ from $\BF$. From the Hasse diagram in Example \ref{ex:big_triangle}, we can check that $12\vert E$ is the unique maximal biflat smaller than $12\vert 3456$. Thus, by Lemma \ref{lem:pos_collapse}, there is a sequence of elementary collapses from $\Delta(\BF)$ onto $\Delta(\BF\setminus \{12\vert 3456\})$. 

Next we will delete $125\vert 3456$ from $\BF\setminus \{12\vert 3456\}$. First, note that we cannot use Lemma \ref{lem:pos_collapse} to delete $125\vert 3456$ from $\BF$. This is because the poset $\BF_{<125\vert 3456}$ has two maximal elements, $12\vert 3456$ and $125\vert E$. But, as we just deleted $12\vert 3456$ from $\BF$, the poset $(\BF\setminus \{12\vert 3456\})_{<125\vert 3456}$ will have $125 \vert E$ as its unique maximal element. Again, this lets us use Lemma \ref{lem:pos_collapse} to collapse $\Delta(\BF\setminus \{12\vert 3456\})$ onto $\Delta(\BF\setminus \{12\vert 3456, 125\vert 3456\})$.

This pattern will continue for the rest of the mixed biflats. When it comes time to delete a mixed biflat $F\vert G$, there will be no remaining mixed biflats that are smaller than $F\vert G$ and $F\vert E$ will be the largest remaining biflat that is smaller than $F\vert G$. This will let us use Lemma \ref{lem:pos_collapse} to collapse onto the deletion of $F\vert G$.
\end{example}

We formalize the ideas from the above example using the following proposition. This proposition is the key inductive step of the proof of Theorem \ref{thm:unmixed}.

\begin{prop}
\label{prop:unmixed_inductive}
Let $F\vert G$ be a mixed biflat and $\mathbf{P}= \BF \setminus \{ F'\vert G' \in \BF^m : F'\vert G' \lessdot F\vert G\}$ be the sub-poset of $\BF$ obtained by removing all mixed biflats smaller than $F\vert G$ under the ordering $\lessdot$. Then there is a sequence of elementary collapses from $\Delta(\mathbf{P})$ onto $\Delta(\mathbf{P}\setminus \{F\vert G\})$.
\end{prop}
\begin{proof}
By Lemma \ref{lem:pos_collapse}, it suffices to show that $\mathbf{P}_{< F\vert G} = \{H\vert K \in \mathbf{P}: H\vert K \lneq F\vert G\}$ has a unique maximal element. Let $H\vert K \in \mathbf{P}_{<F\vert G}$, then, by construction of $\lessdot$, we know that $H\vert K$ is not a mixed biflat. But we also know that $H\subseteq F \subsetneq E$, so it must be that $K=E$. This implies that
 \[\mathbf{P}_{<F\vert G} = \{ H\vert E \in \BF: H\subseteq F\}. \]
This poset clearly has a unique maximal element, namely $F\vert E$.
\end{proof}

We proceed to prove Theorem ~\ref{thm:unmixed} below.

\begin{proof}
Let $\BF^m = F_1\vert G_1 \lessdot F_2\vert G_2 \lessdot \ldots \lessdot F_\ell\vert G_\ell$. By Proposition \ref{prop:unmixed_inductive}, there is a sequence of elementary collapses from $\Delta(\BF)$ onto $\Delta(\BF\setminus \{F_1\vert G_1\})$ and from $\Delta(\BF\setminus \{F_1\vert G_1\})$ onto $\Delta(\BF\setminus \{F_1\vert G_1, F_2\vert G_2\})$ and so on. This gives a sequence of elementary collapses
\[\Delta(\BF) \diagarrow\cdots \diagarrow \Delta(\BF\setminus \{F_1\vert G_1\}) \diagarrow \cdots \diagarrow \Delta(\BF\setminus \BF^m)= \Delta(\BF^u). \]

By Lemma \ref{lem:unmixed_join}, we know that $\Delta(\BF^u)$ is isomorphic to $\Delta_M*\Delta_{M^\perp}$ and therefore we have proven our claim.
\end{proof}

\section{Conormal Complex}\label{sec:conormalcomplex}
As in the last section, we fix $M$, a loopless and coloopless matroid on ground set $E$ and let $M^\perp$ be its dual matroid. We  drop subscripts and let $\BF$ refer to the poset of biflats of $M$. In this section, we set up and prove our second theorem.

\begin{thmSpecial}
\label{thm:thm2}
    Given a matroid $M$, there is an explicit sequence of elementary collapses
    \[\Delta(\BF)\diagarrow \Delta_1 \diagarrow \cdots \diagarrow  \Delta_{M,M^\perp}\]
    from the biflats complex $\Delta(\BF_{M,M^\perp})$ onto the conormal complex $\Delta_{M,M^\perp}$. 
\end{thmSpecial}

Define $\mathbf{BC}$ to be the collection of bichains which aren't biflags. We will prove that $\Delta(\BF)$ and $\Delta_{M,M^\perp}$ are simple homotopy equivalent by performing elementary collapses to remove all of $\mathbf{BC}$ from $\Delta(\BF)$. 
We do so by using Lemma \ref{lem:collapse} to collapse the biflats poset to a complex that results from deleting a subset of bichains that are not biflags from $\mathbf{BF}$.





\begin{defn} \label{def:gamma}
Let $\mathbf{C}\subseteq \mathbf{BC}$ be the subset of bichains $\F\vert \G$ which are not biflags and no flat or dual flat appears twice in $\F\vert \G$. In other words, $\mathbf{C}$ contains all bichains of $\mathbf{BC}$ with the form
\[F_1\vert E \leq F_2\vert G_2 \leq \ldots \leq F_{\ell-1}\vert G_{\ell-1} \leq E\vert G_\ell \]
where 
\[F_1 \subsetneq F_2 \subsetneq \ldots \subsetneq F_{\ell-1} \subsetneq E \quad \text{ and } \quad E \supsetneq G_2 \supsetneq \ldots \supsetneq G_{\ell-1} \supsetneq G_{\ell}\]
are strict chains of flats and dual flats.
We refer to $\mathbf{C}$ as the set of \emph{non-repeating bichains}.
\end{defn}

\begin{example} Let $M$ be the matroid defined in Example ~\ref{ex:small_triangle}.
There are two bichains of $M$ which are not biflags:
\[12\vert E\leq E\vert 34 \quad \text{ and } \quad 12\vert E\leq 12\vert 34 \leq E\vert 34. \]
Of these, only $12\vert E \leq E\vert 34$ does not contain a repeated flat or dual flat, so in this case $\mathbf{C}= \{12\vert E \leq E\vert 34\}$.
\end{example}

The following lemma will allow us remove the set of bichains $\mathbf{BC}$ by removing only the elements in $\mathbf{C}$.

\begin{lem}
\label{lem:gamma}
If $\F\vert \G$ is a bichain of $\mathbf{BC}$ and a flat or dual flat appears twice in $\F\vert \G$, then there exists some $\F'\vert \G'\in \mathbf{BC}$ which is strictly contained in $\F\vert \G$. 
\end{lem}

\begin{proof}
Suppose without loss of generality that $\F\vert \G$ contains a repeated flat $F$ and thus the bichain $\F\vert \G$ has biflats $F\vert G, F\vert G'$ with $G\supsetneq G'$. Recall that the bichain $\F\vert \G$ is not a biflag exactly when
\[\bigcup_{H\vert K \in \F\vert \G} H\cap K = E. \] 
Then $ F\cap G'\subseteq  F\cap G$. It follows that 
$\F\vert \G\setminus \{F\vert G'\}$ is not a biflag 
since
\[\bigcup_{H\vert K \in \F\vert \G\setminus\{F\vert G'\}} H\cap K = E. \qedhere \]
\end{proof}

Since we have shown that that every inclusion-minimal bichain of $\mathbf{BC}$ is non-repeating, we have that 
\[\Delta_{M,M^\perp}=\Delta(\BF)\setminus \mathbf{BC}=\Delta(\BF) \setminus \bigcup_{\F\vert\G \in \mathbf{C}} \str_{\Delta(\BF)}(\F\vert\G).\]

\begin{remark}
One could ask whether $\mathbf{C}$ is actually equal to the set of inclusion minimal bichains of $\mathbf{BC}$. This is not the case for larger matroids. For example, the matroid of the six vertex wheel graph has non-repeating bichains in $\mathbf{BC}$ which are not inclusion minimal. Our choice of $\mathbf{C}$ as a larger than minimal indexing set is necessary for the upcoming proofs. We will use the fact that $\mathbf{C}$ contains every non-repeating bichain in the proofs of Lemmas \ref{lem:NR_struct} and \ref{lem:NR_cone}.
\end{remark}

We now seek to define an order on the bichains $\F\vert \G \in \mathbf{C}$ by which we can sequentially collapse $\Delta(\BF)$ onto $\dell_{\Delta(\BF)}\left(\F\vert \G\right)$, and $\dell_{\Delta(\BF)}\left(\F\vert \G\right)$ onto 
$\dell_{\dell_{\Delta(\BF)}\left(\F\vert \G\right)}\left(\F'\vert \G'\right)$ and so on. 

\begin{defn}
Let
\begin{gather*}
\F\vert \G = \{F_1\vert E \leq F_2\vert G_2 \leq \dots \leq F_{\ell-1}\vert G_{\ell-1}\leq E\vert G_\ell\} \in \mathbf{C} \quad \text{and}\\
\H\vert\K = \{H_1\vert E \leq H_2\vert K_2 \leq \dots \leq H_{m-1}\vert K_{m-1}\leq E\vert K_{m}\} \in \mathbf{C}.
\end{gather*}

We define $\preceq$ to be the binary relation on non-repeating bichains
such that
$\F\vert \G\preceq \H\vert\K$ 
if and only if
\[ F_1\vert E \gneq H_1\vert E 
\quad \text{ or } \quad 
 \left(F_1\vert E = H_1\vert E 
\;  \text{ and } \;
 F_2\vert G_2 \lneq H_2\vert K_2\right) 
 \quad \text{ or } \quad 
 \F\vert \G = \H\vert\K.
\]
\end{defn}

\noindent In words, the relation $\preceq$ compares two different bichains $\F\vert\G$ and $\H\vert \G$ using the following procedure: First check if $F_1$ strictly contains $H_1$. If so, then declare $\F\vert \G \preceq \H\vert \K$. If $F_1$ equals $H_1$, then check if $F_2\vert G_2$ is strictly smaller than $H_2\vert K_2$ as biflats. If this is true, we again declare that $\F\vert \G \preceq \H\vert \K$. In any other case, we say that $\F\vert \G \not \preceq \H\vert \K$. See Example \ref{ex:preceq} for an example of this ordering.

\begin{lem}
The relation $\preceq$ is a partial order on the set of non-repeating bichains $\mathbf{C}$.
\end{lem}
\begin{proof} 
By definition, the relation $\preceq$ is reflexive. 

Now we prove anti-symmetry. Suppose that $\F\vert\G \preceq \H\vert\K$. If $F_1\vert E\gneq H_1\vert E$ then we cannot have that $\H\vert\K\preceq \F\vert \G$ since $H_1\vert E$ is strictly less than $F_1\vert E$ so none of our three conditions can hold. Similarly, if $F_1\vert E= H_1\vert E$ and $F_2\vert G_2 \lneq H_1\vert K_2$, we cannot have that $\H\vert\K \preceq \F\vert\G$. Thus if $\F\vert\G \preceq \H\vert\K$ and $\H\vert\K\preceq \F\vert\G$ we must have that $\F\vert\G = \H\vert\K$, confirming that $\preceq$ is anti-symmetric.

To check transitivity, let $\F\vert \G, \H\vert\K, \mathcal{L}\vert\mathcal{M}$ be non-repeating bichains. If $\F\vert \G \preceq \H\vert\K \preceq \mathcal{L}\vert\mathcal{M}$, the conditions of $\preceq$ ensure that 
\[F_1\vert E \geq H_1\vert E \geq L_1\vert E.\] 
If any of these inequalities are strict, transitivity of the ordering for biflats will imply $F_1\vert E \gneq L_1\vert E$ and thus $\F\vert \G\preceq \mathcal{L}\vert \mathcal{M}$. If instead $F_1\vert E = H_1\vert E = L_1 \vert E$, then the second and third conditions of $\preceq$ ensure that
\[F_2\vert G_2 \leq H_2\vert K_2 \leq L_2\vert M_2.\] 

Again, if any of these inequalities are strict, transitivity of the ordering will imply $F_2\vert G_2 \lneq L_2\vert M_2$ and thus $\F\vert \G\preceq \mathcal{L}\vert \mathcal{M}$. Finally, if all the inequalities are equalities, then the third condition of $\preceq$ will ensure $\F\vert \G = \H\vert \K=\mathcal{L}\vert \mathcal{M}$ so $\F\vert \G \preceq \mathcal{L}\vert \mathcal{M}$.
\end{proof}

\begin{example}
\label{ex:preceq}
There are $228$ bichains of the matroid $M$, as defined in Example ~\ref{ex:big_triangle}, which are not biflags. However, there are only nine non-repeating bichains. The poset structure of the non-repeating bichains $(\mathbf{C},\preceq)$ is given below:

\centering{
\scalebox{0.7}{\begin{tikzpicture}
  \node (12EE3456) at (0,8) {\Large $\{12\vert E \leq E\vert 3456\}$};
  \node (12Elong) at (0,6) {\Large $\{12\vert E \leq 1234\vert 3456 \leq E\vert 56\}$};
    \node (123456) at (-6,4) {\Large$\{1234\vert E\leq E\vert 56 \}$};
    \node (1253456) at (0,4) {\Large$\{125\vert E\leq E\vert 3456 \}$};
    \node (1263456) at (6,4) {\Large$\{126\vert E\leq E\vert 3456 \}$};
  \node (1234156) at (-6,2) {\Large$\{1234\vert E\leq E\vert 156 \}$};
  \node (1234256) at (-12,2) {\Large$\{1234\vert E\leq E\vert 256 \}$};
  \node (12343456) at (0,2) {\Large$\{1234\vert E\leq E\vert 3456\}$};
  \node (456123) at (7,7) {\Large$\{456\vert E\leq E\vert 123\}$};

    \draw[] (12EE3456) -- (12Elong);
    \draw[] (123456) -- (12Elong);
    \draw[] (1253456) -- (12Elong);
    \draw[] (1263456) -- (12Elong);
    \draw[] (1234156) -- (123456);
    \draw[] (12343456) -- (123456);
    \draw[] (1234256) -- (123456);
\end{tikzpicture}}
}

\end{example}

\begin{defn} Fix $\prec^*$ a linear extension of $\preceq$. 
Let $\Delta_{\prec\F\vert\G}$ be the subcomplex of $\Delta\left(\BF\right)$ formed by deleting the open stars of all bichains $\F'\vert \G'\in \mathbf{C}$ which are strictly smaller than $\F\vert\G$ under $\prec^*$. That is,
\[
    \Delta_{\prec \F\vert\G} \coloneqq \; \Delta(\BF) \setminus \bigcup_{\F\vert \G' \prec^* \F\vert \G}\str_{\Delta(\BF)}\left(\F'\vert \G'\right)
\]
\end{defn}

We have constructed $\preceq$ so that the smallest bichains of $\mathbf{C}$ are those with the largest first biflat and the smallest possible second biflat. This gives us strong control of the bichains $\H\vert \K$ contained in the link of $\F\vert \G$ in $\Delta_{\prec\F\vert\G}$. Crucially, we will know what the first few mixed biflats of $\H\vert \K$ look like. This is stated precisely and proven in Lemma \ref{lem:NR_struct}. We then use this lemma in order to prove that the link of $\F\vert \G$ in $\Delta_{\prec\F\vert\G}$ has a cone vertex. This cone vertex will ``fit in-between'' the first and second biflat of $\F\vert \G$. After proving this, we will be able to collapse $\Delta_{\prec \F\vert \G}$ onto $\dell_{\Delta_{\prec \F\vert \G}}(\F\vert \G)$. This will be a vital step in our inductive proof of Theorem \ref{thm:thm2}. 
\begin{lem}
\label{lem:NR_struct}
Let $\F\vert \G$ be a non-repeating bichain written as
\[\F\vert \G = \{F_1\vert E \leq F_2\vert G_2 \leq \dots \leq F_{\ell-1}\vert G_{\ell-1}\leq E\vert G_\ell\}.\]
If $\H\vert \K$ is a bichain in $\Delta_{\prec\F\vert \G}$ containing $\F\vert \G$ then $\H\vert \K$ has the following structural properties:
\begin{enumerate}[(a)]
\item \label{lem:NR_struct_1} $F_1\vert E$ is the largest unmixed biflat of $\H\vert \K$ with $E$ as its dual flat. 
\item \label{lem:NR_struct_2} If $F_1\vert E \lneq H\vert K \lneq F_2\vert G_2$ with $H\vert K\in \H\vert \K$ then $H=F_1$ or $K=G_2$. 
\end{enumerate}
\end{lem}

\noindent{\it Proof of (\ref{lem:NR_struct_1}).}
Suppose that $\H\vert \K\in \Delta_{\prec\F\vert \G}$ contains some biflat $F'\vert E$ where $F'\vert E\gneq F_1\vert E$. This means that $F'\supsetneq F_1$.

First, compare $F'\vert E$ and  $F_2 \vert G_2$.  Since $\F\vert \G\in\mathbf{C}$, we know that $G_2\neq E$; in other words $G_2\subsetneq E$. It follows that $F'\vert E\leq F_2\vert G_2$. In particular, we know that $F'\subseteq F_2$. We break into two cases, the first where $F'\subsetneq F_2$ and the second where $F'= F_2$.

If $F'\subsetneq F_2$, then the length $\ell$ bichain
\[\F'\vert \G' \coloneqq \{F'\vert E \leq F_2\vert G_2 \leq F_3\vert G_3 \leq \dots \leq F_{\ell-1}\vert G_{\ell-1} \leq E\vert G_\ell\}\]
is not a biflag since $\F\vert \G$ is not a biflag and $F'\cup G_2 \supseteq F_1\cup G_2 = E$. 
Further the biflats of $\F'\vert \G'$ have no repeated flats or dual flats since $\F\vert \G$ was an element of $\mathbf{C}$ and we assumed that $F'\subsetneq F_2$. 
Comparing the two bichains as elements of the poset of non-repeating bichains ($\mathbf{C}, \preceq)$, we see that  $F'\vert E \gneq F_1\vert E$ implies that $\F'\vert \G' \prec^* \F\vert \G$. 
We conclude that $\F'\vert \G'$ is not a chain in the simplicial complex $\Delta_{\prec\F\vert \G}$ because it is in the set of bichains that have been deleted from $\BF$ in the construction of this complex.
This contradicts our assumption of $\H\vert \K$ being a bichain in $ \Delta_{\prec\F\vert \G}$  since $\F'\vert \G'\subseteq \H\vert \K$.

On the other hand, if $F'=F_2$, then the length $\ell-1$ bichain
\[\F'\vert \G' \coloneqq  \{F'\vert E \leq F_3\vert G_3 \leq \ldots \leq F_{\ell-1}\vert G_{\ell-1} \leq E\vert G_\ell\}\]
is not a biflag. We can check this using the gap condition for biflags. Because $\F\vert \G$ is not a biflag, the only possible gap of $\F'\vert \G'$ is at the first index. However, 
\[F'\cup G_3 = F_2\cup G_3 = E\] 
and thus $\F'\vert \G'$ contains no gaps and is not a biflag. Furthermore, $\F'\vert \G'$ has no repeated flats or dual flats 
by virtue of $\F\vert \G$ being a non-repeating bichain.
As elements of $(\mathbf{C}, \preceq)$, we have that $\F'\vert \G' \prec^* \F\vert \G$ 
since $F'\vert E \gneq F_1\vert E$.
As in the previous case, the bichain $\F'\vert \G'$ cannot be an element of $\Delta_{\prec\F\vert \G}$.
This contradicts our starting hypothesis that $\H\vert \K\in \Delta_{\prec\F\vert \G}$.

Hence, the largest biflat unmixed in $\H\vert\K$ must be $F_1 \vert E$. 
\qed

\noindent{\it Proof of (\ref{lem:NR_struct_2}).}
Assume that $\H\vert \K$ contains some biflat $H\vert K$ such that
\[F_1\vert E \lneq H\vert K \lneq F_2\vert G_2, \quad H\neq F_1 \quad \text{and}\quad K\neq G_2.\] 
Since $F_1\vert E \lneq H\vert K\lneq F_2\vert G_2$, we must have that $F_1 \subsetneq H$ and $K\supsetneq G_2$. By  the first part of this lemma,
we also have that $K\subsetneq E$. We break into two cases depending on whether $H\cup G_3 =E$ or $H\cup G_3 \neq E$.

First, suppose that $H\cup G_3 = E$. Then $\H\vert \K$ contains the length $\ell$ bichain
\[\F'\vert \G' \coloneqq \{F_1\vert E \leq H\vert K \leq F_3\vert G_3 \leq \ldots \leq F_{\ell-1}\vert G_{\ell-1} \leq E\vert G_\ell\}.\]

\noindent To affirm $\F'\vert \G'$ is not a biflag, we check the biflats in $\F'\vert \G'$ that are distinct from $\F\vert\G$ to see that no gaps were introduced. Notice that $F_1\cup K \supseteq F_1\cup G_2 = E$ and  
$H\cup G_3 = E$, thus $\F'\vert \G'$ not a biflag. 
Further, because $\F\vert\G$ has no repeated flats or coflats, we have that
\[F_1\subsetneq H \subseteq F_2 \subsetneq F_3 
\qquad \text{and} \qquad  E\supsetneq K \supseteq G_2 \supsetneq G_3.\]
This implies that the bichain $\F'\vert \G'$ has no repeated flats or coflats.
As elements of $(\mathbf{C}, \preceq)$, we have that $\F'\vert \G'\prec^* \F\vert \G$ since $H\vert K \lneq F_2\vert G_2$.
It follows that $\F'\vert \G' \notin \Delta_{\prec\F\vert \G}$.  
This contradicts our assumption that  $\H\vert \K\in \Delta_{\prec\F\vert \G}$, since $\F'\vert \G'$ is a sub-bichain of $\H\vert \K$.

Now suppose that $H\cup G_3 \neq E$ and consider the length $\ell+1$ bichain
\[\F'\vert \G'\coloneqq \{F_1\vert E \leq H\vert K \leq F_2\vert G_2 \leq F_3\vert G_3 \leq \ldots \leq F_{\ell-1}\vert G_{\ell-1} \leq E\vert G_\ell\}.\]
Because $\F'\vert\G'$ is a sub-bichain of $\H\vert \K$, $\F'\vert \G'$ is not a biflag.
We now check if $\F'\vert \G'$ has any repeated flats or coflats. The only case remaining to check is that $H\neq F_2$. Since $\F\vert \G$ has no gaps, we know that $F_2\cup G_3 = E$. Our current assumption is that $H\cup G_3 \neq E$, thus $F_2$ and $H$ are distinct.
Hence, $\F'\vert \G'$ is an element of $\mathbf{C}$. 
Once again, by our assumption, $\F'\vert G' \prec^* F\vert G$. This means that $\F'\vert \G'$ is not an element of $\Delta_{\prec\F\vert \G}$ and, because $\F'\vert \G'\subseteq \H\vert \K$, we have reached a contradiction.

We conclude that assuming $H\neq F_1$ and $K\neq G_2$ leads to a contradiction, which proves the second part of our lemma.
\qed

\noindent We use Lemma ~\ref{lem:NR_struct} to prove that the link of a non-repeating bichain $\F\vert \G$ in $\Delta_{\prec \F\vert \G}$ always contains a cone vertex. This will then let us use Lemmas ~\ref{lem:cone_collapse} and ~\ref{lem:collapse} to collapse $\Delta_{\prec\F\vert\G}$ onto $\dell_{\Delta_{\prec\F\vert\G}}\left(\F\vert\G\right)$.

\begin{lem}\label{lem:NR_cone}
Let $\F\vert \G$ be a non-repeating bichain written as
\[\F\vert \G = \{F_1\vert E \leq F_2\vert G_2 \leq \dots \leq F_{\ell-1}\vert G_{\ell-1}\leq E\vert G_\ell\}.\]
Then the link of $\F\vert \G$ in $\Delta_{\prec\F\vert \G}$ contains a cone vertex.
\end{lem}
\begin{proof}
Because $\F\vert \G$ is not a bichain, $F_1\cup G_2 =E$ and $F_1\vert G_2$ is a biflat. Also since $\F\vert \G$ is in $\mathbf{C}$, no flats or dual flats are repeated and $F_1\vert G_2\not\in \F\vert \G$. We now prove that $F_1\vert G_2$ is a cone vertex of $\lk_{\Delta_{\prec\F\vert \G}}(\F\vert \G)$. To accomplish this, we prove that if $\H\vert \K$ is a bichain of $\Delta_{\prec\F\vert \G}$ and $\F\vert \G\subseteq \H\vert \K$ but $F_1\vert G_2$ isn't a biflat of $\H\vert \K$, then $\H\vert \K \cup \{F_1\vert G_2\}$ is a bichain in $\Delta_{\prec\F\vert \G}$.

Suppose that $\H\vert \K$ is a bichain in $\Delta_{\prec\F\vert \G}$ containing $\F\vert \G$ but not $F_1\vert G_2$. As a consequence of Lemma ~\ref{lem:NR_struct}, $\H\vert \K$ is forced to have the form
\[\dots \leq F_1\vert E \leq F_1\vert K_1\leq F_1\vert K_2\leq \dots \leq F_1\vert K_\alpha \leq H_1\vert G_2 \leq H_2\vert G_2 \leq \dots \leq H_\beta\vert G_2 \leq F_2\vert G_2\leq \cdots \]
where 
\[E\supsetneq K_1 \supsetneq K_2\supsetneq \dots \supsetneq K_\alpha \supsetneq G_2 
\qquad \text{and} \qquad  F_1\subsetneq H_1 \subsetneq H_2 \subsetneq \dots \subsetneq H_\beta \subsetneq F_2\] are chains of dual flats and flats respectively. 
Thus we can see that $\H\vert \K\cup \{F_1\vert G_2\}$ is a bichain since it will have the form
\[\dots \leq F_1\vert E \leq F_1\vert K_1\leq F_1\vert K_2\leq \dots \leq F_1\vert K_\alpha \leq F_1\vert G_2 \leq H_1\vert G_2 \leq H_2\vert G_2 \leq \dots \leq H_\beta\vert G_2 \leq F_2\vert G_2\leq \cdots \]
It remains to check that $\H\vert \K\cup \{F_1\vert G_2\}$ is in $\Delta_{\prec\F\vert \G}$. That is, we need to check that no sub-bichain of $\H\vert \K\cup \{F_1\vert G_2\}$ is an element of $\mathbf{C}$ which precedes $\F\vert \G$ in $(\mathbf{C}, \prec^*)$. Suppose $\F'\vert \G' \subseteq \H\vert \K\cup \{F_1\vert G_2\}$ is a non-repeating bichain.  This bichain is either a sub-bichain of $\H\vert \K$ or it contains $\{F_1\vert G_2\}$.

If $\F'\vert G' \subseteq \H\vert\K$, then $\F\vert \G \prec^* \F'\vert \G'$ by virtue of $\H\vert\K$ being in $\Delta_{\prec\F\vert\G}$. 
Now suppose the biflat $F_1\vert G_2$ is in $\F'\vert \G'$. Let $F'\vert E$ be the unmixed biflat of $\F'\vert \G'$ with $E$ as its dual flat. By Lemma ~\ref{lem:NR_struct} we know that $F'\vert E \leq F_1\vert E$. Even further, this inequality has to be strict, since otherwise $\F'\vert \G'$ has a repeated flat, namely $F_1$. Thus, $F'\vert E \lneq F_1\vert E$, which means that $\F\vert \G \prec^* \F'\vert \G'$.
\end{proof}

\begin{example}
The matroid $M$, as defined in Example ~\ref{ex:k4-edge}, has $16$ bichains which are not biflags. Two of them are non-repeating,
\[\{125\vert E \leq E\vert 34\} \quad \text{ and } \quad \{345\vert E \leq E\vert 12\}. \]
Both bichains in $\mathbf{C}$ are incomparable under $\prec$. Lemma ~\ref{lem:NR_cone} implies that the link in $\Delta\left(\BF\right)$ of both non-repeating bichains will contain a cone point. We verify this by plotting these links. The left simplicial complex is the link of $\{125\vert E \leq E\vert 34\}$ in $\Delta\left(\BF\right)$ while the right simplicial complex is the link of $\{345\vert E\leq E\vert 12\}$ in $\Delta\left(\BF\right)$.

\begin{center}

\begin{tikzpicture} [scale=.4,auto=left]
  \node (12534) at (0,0) {$125\vert 34$};
  \node (5E) at (-4,-2) {$5\vert E$};
  \node (2E) at (4,-2) {$2\vert E$};
  \node (1E) at (0,3) {$1\vert E$};
  \foreach \from in {5E,2E,1E}
    \draw (\from) -- (12534);  
\end{tikzpicture}
\hspace{1cm}

\begin{tikzpicture} [scale=.4,auto=left]
  \node (12534) at (0,0) {$345\vert 12$};
  \node (5E) at (-4,-2) {$4\vert E$};
  \node (2E) at (4,-2) {$5\vert E$};
  \node (1E) at (0,3) {$3\vert E$};
  \foreach \from in {5E,2E,1E}
    \draw (\from) -- (12534);  
\end{tikzpicture}

\end{center}

\end{example}

\noindent We now have all the tools to prove Theorem ~\ref{thm:thm2}.

\begin{proof}
Suppose that our linear ordering $\preceq^*$ of the set of non-repeating bichains $\mathbf{C}= \{\F\vert \G, \H\vert \K, \ldots , \L\vert \M\}$ is one such that 
\[\F\vert \G \preceq^* \H\vert\K \preceq^*\ldots \preceq^* \L\vert\M. \]
By Lemma ~\ref{lem:NR_cone}, we know that the link of $\F\vert \G$ in $\Delta_{\prec \F\vert\G}$ has a cone vertex, the link of $\H\vert \K$ in $\Delta_{\prec \H\vert\K}$ has a cone vertex and so on. Lemmas ~\ref{lem:cone_collapse} and ~\ref{lem:collapse}, let us find a sequence of elementary collapses 
\[\Delta(\BF) = \Delta_{\prec\F\vert\G} \diagarrow \cdots \diagarrow \dell_{\Delta_{\prec\F\vert\G}}\left(\F\vert\G\right)\diagarrow \cdots \diagarrow \dell_{\Delta_{\prec \L\vert\M}}\left(\L\vert\M\right) = \Delta_{M,M^\perp}.\]
Therefore we have proven Theorem ~\ref{thm:thm2}.
\end{proof}

\section{Further Questions}\label{sec:future}

\subsection{``Global'' deformation retracts} \label{q:global}
Recall that the conormal fan $\Sigma_{M,M^\perp}$ is the restriction of the bipermutohedral fan $\Sigma_{E,E}$ to $\trop(M)\times \trop\left(M^\perp\right)$. The bipermutohedral fan has an associated simplicial complex $\Delta_{E,E}$ which will contain all conormal complexes as induced subcomplexes. This complex, which we call the \textit{bipermutohedral complex}, will have faces given by biflags of bisubsets. 

A \emph{bisubset} $S\vert T$ is a pair of non-empty subsets $S,T\subset E$ such that $S\cup T = E$ and $S\cap T \neq E$. Like in Definition ~\ref{def:biflat_complex}, we can define the simplicial complex $\Delta\left(\BS_{E,E}\right)$ whose faces are given by bichains of bisubsets. This simplicial complex contains the bipermutohedral complex as a subcomplex.

The fan structure of the product of Bergman fans $\Sigma_M\times\Sigma_{M^\perp}$ on $\trop(M)\times \trop\left(M^\perp\right)$ is the restriction of a larger fan, namely the product of permutohedral fans $\Sigma_E\times \Sigma_E$, see \cite{AK}. As in Lemma ~\ref{lem:unmixed_join}, the complex $\Delta_E*\Delta_E$ associated to $\Sigma_E\times \Sigma_E$ will be isomorphic to a subcomplex of $\Delta\left(\BS_{E,E}\right)$. 

We can thus recreate the diagram at the end of Section ~\ref{sec:conormal_back} for these larger ``global'' objects:
\[\begin{tikzcd}
	& {N_E\times N_E} \\
	{\Sigma_{E,E}} && {\Sigma_E\times \Sigma_{E}} \\
	{\Delta_{E,E}} & {\Delta(\BS_{E,E})} & {\Delta_E*\Delta_{E}}
	\arrow[squiggly, from=2-1, to=3-1]
	\arrow["\varphi", hook, from=3-1, to=3-2]
	\arrow[squiggly, from=2-3, to=3-3]
	\arrow["\psi"', hook', from=3-3, to=3-2]
	\arrow[squiggly, from=1-2, to=2-1]
	\arrow[squiggly, from=1-2, to=2-3]
\end{tikzcd}\]

It is unknown to the authors if the maps $\varphi$ and $\psi$ induce homotopy equivalences. Unlike the Bergman case, where $\Delta_E$ is the Bergman complex of $U_{n,n}$, none of these ``global'' complexes are related to a single matroid. Thus the proofs we give in Sections ~\ref{sec:unmixed} and ~\ref{sec:conormalcomplex} do not immediately carry over.

\begin{question}
Are $\Delta_{E,E}$ and $\Delta_E*\Delta_E$ homotopic to $\Delta\left(\BS_{E,E}\right)$? Can this be shown using combinatorial methods?
\end{question}

\subsection{$\Aut(M)$-equivariant homotopy equivalences}
For simplicial complexes and posets equipped with a $G$-action there exists a notion of $G$-equivariant homotopy equivalence. See \cite{TW} for the relevant definitions. As the simplicial complexes we construct only rely on matroidal information, they are all equipped with an $\Aut(M)$ action. It will be interesting to see if our proof methods can be upgraded to produce $\Aut(M)$-equivariant homotopy equivalences. 

\begin{question}
Is the biflats complex $\Aut(M)$-equivariantly homotopy equivalent to the conormal complex and the join of the Bergman complexes of the matroid and its dual? 
\end{question}

\printbibliography
\end{document}